\newcommand{\bbF}{\mathbb{F}}
\newcommand{\bbQ}{\mathbb{Q}}
\newcommand{\bbR}{\mathbb{R}}
\newcommand{\bbZ}{\mathbb{Z}}
\newcommand{\calF}{\mathcal{F}}
\newcommand{\rmE}{\mathrm{E}}
\newcommand{\rmF}{\mathrm{F}}
\newcommand{\rmG}{\mathrm{G}}
\newcommand{\rmH}{\mathrm{H}}
\newcommand{\rmI}{\mathrm{I}}
\newcommand{\rmJ}{\mathrm{J}}
\newcommand{\rmK}{\mathrm{K}}
\newcommand{\rmN}{\mathrm{N}}
\newcommand{\rmO}{\mathrm{O}}
\newcommand{\rmQ}{\mathrm{Q}}
\newcommand{\rmX}{\mathrm{X}}
\newcommand{\rmZ}{\mathrm{Z}}
\newcommand{\bfC}{\mathbf{C}}
\renewcommand{\mod}{\mathbf{mod}}
\newcommand{\ab}{\mathbf{ab}}
\newcommand{\Ab}{\mathbf{Ab}}
\renewcommand{\phi}{\varphi}
\newcommand{\CAT}{\text{\upshape CAT}}
\newcommand{\TOP}{\text{\upshape TOP}}
\newcommand{\PL}{\text{\upshape PL}}
\newcommand{\DIFF}{\text{\upshape DIFF}}
\newcommand{\st}{\text{\upshape st}}
\newcommand{\op}{\text{\upshape op}}
\DeclareMathOperator{\Id}{Id}
\DeclareMathOperator{\pr}{pr}
\DeclareMathOperator*{\colim}{colim}
\DeclareMathOperator{\GL}{GL}
\DeclareMathOperator{\BGL}{BGL}
\DeclareMathOperator{\Inn}{Inn}
\DeclareMathOperator{\Out}{Out}
\DeclareMathOperator{\Aut}{Aut}
\DeclareMathOperator{\IA}{IA}
\DeclareMathOperator{\IO}{IO}
\DeclareMathOperator{\Hom}{Hom}
\DeclareMathOperator{\Tor}{Tor}
\DeclareMathOperator{\HH}{HH}
\DeclareMathOperator{\HML}{HML}
\DeclareMathOperator{\Lie}{Lie}
\newtheorem{theorem}{Theorem}
\newtheorem{proposition}[theorem]{Proposition}
\newtheorem{corollary}[theorem]{Corollary}
\theoremstyle{definition}
\newtheorem{definition}[theorem]{Definition}
\newtheorem{remark}[theorem]{Remark}
\numberwithin{theorem}{section}
\numberwithin{equation}{section}
\numberwithin{figure}{section}
\title{The rational stable homology\\ of mapping class groups\\ of universal nil-manifolds}
\author{Markus Szymik}
\date{\mydate\today}
\begin{document}

\maketitle

\renewcommand{\abstractname}{\vspace{-2\baselineskip}}

\begin{abstract}
\noindent 
We compute the rational stable homology of the automorphism groups of free nilpotent groups. These groups interpolate between the general linear groups over the ring of integers and the automorphism groups of free groups, and we employ functor homology to reduce to the abelian case. As an application, we also compute the rational stable homology of the outer automorphism groups and of the mapping class groups of the associated aspherical nil-manifolds in the~$\TOP$,~$\PL$, and~$\DIFF$ categories.

\vspace{\baselineskip}
\noindent MSC: 
20J05,	% Homological methods in group theory
20E36 	% Automorphisms of infinite groups
(19M05, % Miscellaneous applications of~$K$-theory
18A25, 	% Functor categories, comma categories
18G40) 	% Spectral sequences

\vspace{\baselineskip}
\noindent Keywords: Stable homology, automorphism groups, nilpotent groups, functor categories, Hochschild homology, stable K-theory, spectral sequences
\end{abstract}

%%%

\section{Introduction}

For any integer~$r\geqslant 1$, let~$\rmF_r$ denote the free group on~$r$ generators. The automorphism groups~$\Aut(\rmF_r)$ of these groups have been the subject of plenteous research. Nielsen began his investigations in that direction about a century ago, and more recent work has led to substantial progress in the computation of their homology. This reckoning applies in particular after stabilization: We can extend an automorphism of~$\rmF_r$ to~$\rmF_{r+1}$ by sending the new generator to itself, and in the~(co)limit~\hbox{$r\to\infty$}, we get the stable automorphism group~$\Aut(\rmF_\infty)$. Note that this notation does {\em not} refer to an automorphism group of a group~$\rmF_\infty$. The homo\-logy of the group~$\Aut(\rmF_\infty)$ is the colimit of the homo\-logies of the groups~$\Aut(\rmF_r)$. This stable homology has been computed by Galatius~\cite{Galatius}. In particular, it is rationally trivial, as had been conjectured by Hatcher and Vogtmann~\cite{Hatcher+Vogtmann:rational}. 

The descending sequence of normal subgroups  of the free group~$\rmF_r$ that is given by~$\Gamma_1(\rmF_r)=\rmF_r$ and~$\Gamma_{c+1}(\rmF_r)=[\,\rmF_r,\Gamma_c(\rmF_r)\,]$ is the lower central series. The quotient groups~\hbox{$\rmN^r_c=\rmF_r/\Gamma_{c+1}(\rmF_r)$} are the free nilpotent groups of class~$c\geqslant1$. 

For instance, the first case gives the free abelian groups~$\rmN^r_1\cong\bbZ^r$. Their automorphism groups are the general linear groups~$\GL_r(\bbZ)$ over the ring~$\bbZ$ of integers, and their homology is related to the algebraic K-theory of the ring~$\bbZ$. In contrast to the case of the groups~$\Aut(\rmF_r)$, we only know very few of their stable homology groups integrally. This changes when we are willing to work rationally. In that case, a complete computation of the stable homology has been achieved by Borel~\cite{Borel} using analytic methods. There is an isomorphism
\begin{equation}\label{eq:Borel}
\rmH^*(\GL_\infty(\bbZ);\bbQ)\cong\Lambda_\bbQ(r_5,r_9,r_{13},\dots)
\end{equation}
between the rational stable cohomology ring and a rational exterior algebra on generators in degrees~$5, 9, 13,\dots,4n+1,\dots$. The methods that we will use here are entirely algebraic and topological. They only provide new information relative to the homology of the general linear groups, to which we have nothing to add. We could, in principle, also attempt to use them to obtain information on torsion in the relative homology. At present, such computations are out of reach, however. 

%%%

In this writing, we address the general (and therefore non-abelian) free nilpotent groups~$\rmN^r_c$ of class~\hbox{$c\geqslant2$}~and rank~$r\geqslant2$. Among these is the famous Heisenberg group of unit upper-triangular~$(3,3)$--matrices with integral entries as the smallest example~$\rmN_2^2$. Its classifying space is the~$3$--dimensional Heisenberg manifold~$\rmX^2_2$. In general, the groups~$\rmN^r_c$ arise as fundamental groups of certain aspherical nil-manifolds~$\rmX^r_c$ that we can present as iterated torus bundles over tori. 

We are interested in the symmetries of these groups and manifolds, beginning with the automorphism groups~$\Aut(\rmN^r_c)$ of the groups~$\rmN^r_c$, and their homology. Again, we can stabilize by passing from~$r$ to~$r+1$, extending an automorphism by sending the new generator to itself. In the~(co)limit~$r\to\infty$, we get the stable automorphism group~$\Aut(\rmN_c^\infty)$. Again, this notation is {\em not} used to refer to the automorphism group of a group~$\rmN_c^\infty$. The homology of the group~$\Aut(\rmN_c^\infty)$ is the colimit of the homologies of the groups~$\Aut(\rmN_c^r)$. 

%%%

The homology of a group with constant coefficients is arguably the most fundamental case to consider. However, when we are dealing with families of groups, it turns out that it is often helpful to set up the computations to function in the much broader context of {\it polynomial}~(or {\it finite degree}) coefficients. See~\cite{Betley}, \cite{Djament}, and~\cite{Szymik}, for instance, and the proofs of Theorem~\ref{thm:Nil_c} and Theorem~\ref{thm:Out} below. The more comprehensive setting often allows to feed inductions, and whenever possible, it is, therefore, appropriate to give the results in that generality.

\begin{theorem}\label{thm:Nil_c}
Let~$V$ be a polynomial functor from the category~$\ab$ of finitely generated free abelian groups to the category of rational vector spaces. For every integer~$c\geqslant 1$ the canonical homomorphism
\[
\rmH_*(\Aut(\rmN_c^\infty);V_\infty)\longrightarrow\rmH_*(\GL_\infty(\bbZ);V_\infty)
\]
is an isomorphism. 
\end{theorem}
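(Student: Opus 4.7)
The plan is to proceed by induction on the nilpotency class $c\geqslant 1$. The base case $c=1$ is tautological, since $\rmN_1^r\cong\bbZ^r$ identifies $\Aut(\rmN_1^r)$ with $\GL_r(\bbZ)$ and the comparison map becomes the identity.

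For the inductive step from $c-1$ to $c$, I would exploit the characteristic central extension
\[
1 \longrightarrow L_c(r) \longrightarrow \rmN_c^r \longrightarrow \rmN_{c-1}^r \longrightarrow 1,
\]
in which $L_c(r)=\Gamma_c(\rmF_r)/\Gamma_{c+1}(\rmF_r)$ is the degree-$c$ homogeneous component of the free Lie ring, a free abelian group depending polynomially on $r$ by the Witt formula. Functoriality yields a short exact sequence of automorphism groups
\[
1 \longrightarrow K_c(r) \longrightarrow \Aut(\rmN_c^r) \longrightarrow \Aut(\rmN_{c-1}^r) \longrightarrow 1,
\]
with kernel $K_c(r)\cong\Hom(\bbZ^r,L_c(r))$ the Andreadakis-type abelian group of automorphisms acting trivially on $\rmN_{c-1}^r$. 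Surjectivity of the projection follows from lifting generating sets, and the derivation description of the kernel is standard.

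I would then invoke the Lyndon-Hochschild-Serre spectral sequence
\[
E^2_{p,q}=H_p\bigl(\Aut(\rmN_{c-1}^r);\,H_q(K_c(r);V_r)\bigr)\Longrightarrow H_{p+q}\bigl(\Aut(\rmN_c^r);V_r\bigr).
\]
Because $V$ factors through the abelianization on which $K_c(r)$ acts trivially, the rational coefficients simplify to
\[
H_q(K_c(r);V_r)\;\cong\;V_r\otimes\Lambda^q\bigl(K_c(r)\otimes_\bbZ\bbQ\bigr),
\]
which is again a polynomial functor of $r$. This closure under the coefficient construction is precisely why the induction must be carried through the enlarged class of all polynomial coefficient systems rather than merely the constant ones. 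Passing to the stable colimit $r\to\infty$ and applying the inductive hypothesis to these enriched coefficients identifies the $E^2$-page with
\[
E^2_{p,q}=H_p\bigl(\GL_\infty(\bbZ);\,V_\infty\otimes\Lambda^q(K_c^\infty\otimes\bbQ)\bigr).
\]

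The crux of the argument, and the step I expect to be the main obstacle, is to show that the edge map $H_*(\GL_\infty(\bbZ);V_\infty)\to H_*(\Aut(\rmN_c^\infty);V_\infty)$ is an isomorphism, equivalently that the rows $q>0$ contribute trivially to the abutment. For this I would appeal to the Scorichenko-Pirashvili-Dundas-McCarthy computation of the stable homology of $\GL_\infty(\bbZ)$ with polynomial coefficients, which expresses $H_*(\GL_\infty(\bbZ);U_\infty)$ as a tensor product of $H_*(\GL_\infty(\bbZ);\bbQ)$ with the stable K-theory of $\bbZ$ with coefficients in $U$, the latter being Hochschild homology. Over $\bbZ$, Hochschild homology is concentrated in degree zero, and this concentration, applied row by row to the spectral sequence above, forces the $q>0$ contributions to cancel and leaves the abutment equal to $H_*(\GL_\infty(\bbZ);V_\infty)$, closing the induction.
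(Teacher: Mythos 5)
Your overall architecture is close to the paper's: the paper likewise runs a Lyndon--Hochschild--Serre spectral sequence for an extension of $\Aut(\rmN_c^r)$ (it uses the single extension $1\to\IA_c^r\to\Aut(\rmN_c^r)\to\GL_r(\bbZ)\to1$ together with the estimate that $\rmH_q(\IA_c^r)\otimes\bbQ$ is a summand of $\Lambda^q\Hom(\bbZ^r,\Lie^{[2,c]}(\bbZ^r))\otimes\bbQ$, rather than your induction on $c$ through the stepwise extensions, but that is a harmless variation), and it likewise converts the resulting twisted $\GL_\infty(\bbZ)$--homology into stable K-theory and then, via Scorichenko, into bifunctor Hochschild homology of the category $\ab$.

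The gap is in your final paragraph. First, the relevant object is not the Hochschild homology of the ring $\bbZ$ with bimodule coefficients but the Hochschild homology of the category $\ab$ with coefficients in a non-additive bifunctor, equivalently Mac Lane homology $\HML_*(\bbZ;-)$; this is emphatically not concentrated in degree zero in general (for instance $\HML_*(\bbZ;\Lambda^2)$ is nonzero $2$--torsion in infinitely many positive degrees). Second, and more seriously, concentration in degree zero would not give you what you need: if the degree-zero term survived, each row $q>0$ of your outer spectral sequence would contribute $\rmH_*(\GL_\infty(\bbZ);\bbQ)\otimes\HH_0$ and nothing would force it to cancel. What you actually need, and what is true, is that for $q\geqslant1$ the groups $\rmH_*\bigl(\GL_\infty(\bbZ);V_\infty\otimes\Lambda^q(\Hom(\bbZ^\infty,\Lie^c(\bbZ^\infty))\otimes\bbQ)\bigr)$ vanish in \emph{every} degree, including zero. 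The paper derives this from two inputs absent from your sketch: the rational splitting $\Lie^b\to\textstyle{\bigotimes^b}\to\Lie^b$ whose composite is multiplication by $b$, and Pirashvili's vanishing lemma, which kills $\HML_*(\bbZ;\textstyle{\bigotimes^b})$ in all degrees for $b\geqslant2$ (a K\"unneth argument then handles the exterior power $\Lambda^q$ as a retract of $\otimes^q$). Without an argument of this kind your induction does not close.
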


By~$2$-out-of-$3$, it follows that for every integer~$c\geqslant 2$ the canonical homomorphism
\[
\rmH_*(\Aut(\rmN_{c}^\infty);V_\infty)\longrightarrow\rmH_*(\Aut(\rmN_{c-1}^\infty);V_\infty)
\]
is an isomorphism. 

In the case of the constant functor~$V$ with~$V(\bbZ^r)=\bbQ$, we see that 
the natural homomorphism~$\Aut(\rmN_c^\infty)\to\GL_\infty(\bbZ)$ is a rational homology equivalence for every integer~$c\geqslant 1$: 

\begin{corollary}\label{cor:intro}
For all integers~$c\geqslant 2$ the canonical homomorphisms
\[
\cdots\longrightarrow\Aut(\rmN^r_c)\longrightarrow\Aut(\rmN^r_{c-1})
\longrightarrow\cdots
\longrightarrow\GL_r(\bbZ)
\]
induce isomorphisms in rational stable homology.
\end{corollary}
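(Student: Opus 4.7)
The plan is to deduce the corollary directly from Theorem~\ref{thm:Nil_c} by specializing the coefficient functor. Specifically, I would take~$V$ to be the constant functor on~$\ab$ with value~$\bbQ$, so that~$V_\infty=\bbQ$. The constant functor has vanishing cross-effects and is therefore polynomial of degree~$0$, so it satisfies the hypothesis of Theorem~\ref{thm:Nil_c}. Since~$\rmH_*(G;\bbQ)$ for a group~$G$ means ordinary rational group homology, and homology commutes with the filtered colimits involved in stabilization, the target of the map in Theorem~\ref{thm:Nil_c} computes the rational stable homology of~$\Aut(\rmN_c^r)$ as~$r\to\infty$, and similarly for~$\GL_r(\bbZ)$.

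First I would note the factorization of the canonical homomorphism~$\Aut(\rmN_c^r)\to\GL_r(\bbZ)$ through~$\Aut(\rmN_{c-1}^r)$, which exists because~$\Gamma_c(\rmN_c^r)$ is a characteristic subgroup and the quotient~$\rmN_c^r/\Gamma_c(\rmN_c^r)$ is~$\rmN_{c-1}^r$; iterating yields the entire chain in the statement. Stabilizing in~$r$ gives a corresponding chain of maps between the stable groups, and each of its terms maps to~$\GL_\infty(\bbZ)$.

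Next I would invoke Theorem~\ref{thm:Nil_c} with~$V=\bbQ$ for each of the values~$c$ and~$c-1$. Both
\[
\rmH_*(\Aut(\rmN_c^\infty);\bbQ)\longrightarrow\rmH_*(\GL_\infty(\bbZ);\bbQ)
\qquad\text{and}\qquad
\rmH_*(\Aut(\rmN_{c-1}^\infty);\bbQ)\longrightarrow\rmH_*(\GL_\infty(\bbZ);\bbQ)
\]
are isomorphisms. Since the first map factors through the second via the homomorphism~$\Aut(\rmN_c^\infty)\to\Aut(\rmN_{c-1}^\infty)$ obtained by stabilizing the quotient maps, the~$2$-out-of-$3$ property (already noted right after the statement of Theorem~\ref{thm:Nil_c}) forces the induced map~$\rmH_*(\Aut(\rmN_c^\infty);\bbQ)\to\rmH_*(\Aut(\rmN_{c-1}^\infty);\bbQ)$ to be an isomorphism as well.

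There is really no hard step here: all the work sits inside Theorem~\ref{thm:Nil_c}, and the only things to check are the bookkeeping that the constant functor is polynomial, that the relevant triangle of canonical homomorphisms commutes, and that passage to the colimit over~$r$ matches Theorem~\ref{thm:Nil_c}'s formulation. The potential pitfall worth spelling out carefully is simply the compatibility of the quotient maps~$\rmN_c^r\to\rmN_{c-1}^r$ with the stabilizations in~$r$, so that the chain of homomorphisms in the statement really factors through~$\GL_r(\bbZ)$ and is compatible with the one from Theorem~\ref{thm:Nil_c}.
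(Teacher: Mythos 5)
Your proposal is correct and matches the paper's own derivation: the corollary is obtained exactly by specializing Theorem~\ref{thm:Nil_c} to the constant functor~$V=\bbQ$ (which is polynomial of degree~$0$) and applying the~$2$-out-of-$3$ property to the factorization of~$\Aut(\rmN_c^\infty)\to\GL_\infty(\bbZ)$ through~$\Aut(\rmN_{c-1}^\infty)$. The bookkeeping points you flag (compatibility of the quotient maps with stabilization, colimits commuting with homology) are exactly the ones the paper leaves implicit.
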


%%%

As a geometric application, we consider the automorphism groups (in various geometric categories) of the aspherical nil-manifolds~$\rmX^r_c$ with fundamental groups isomorphic to~$\rmN^r_c$. 

The group~$\pi_0\rmG(\rmX^r_c)$ of components of the topological monoid~$\rmG(\rmX^r_c)$ of homotopy self-equivalences of the classifying space~$\rmX^r_c$ of the group~$\rmN^r_c$ is isomorphic to the group~$\Out(\rmN^r_c)$ of outer automorphism of the group~$\rmN^r_c$. The following result is Theorem~\ref{thm:Out_new} in the main text.

\begin{theorem}\label{thm:Out}
For every class~$c\geqslant1$ the canonical projection induces an isomorphism
\[
\rmH_d(\Aut(\rmN^r_c);V(\bbZ^r))\longrightarrow\rmH_d(\Out(\rmN^r_c);V(\bbZ^r))
\]
in the stable range of~$\Aut(\rmN^r_c)$ for all finite degree functors~$V$ on~$\ab$ with values in rational vector spaces.
\end{theorem}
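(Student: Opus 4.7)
The plan is to analyze the Lyndon--Hochschild--Serre spectral sequence of the canonical group extension
\[
1 \longrightarrow \Inn(\rmN^r_c) \longrightarrow \Aut(\rmN^r_c) \longrightarrow \Out(\rmN^r_c) \longrightarrow 1
\]
and to feed it with Theorem~\ref{thm:Nil_c} via the polynomial functor framework. The case~$c = 1$ is immediate since~$\rmN^r_1 \cong \bbZ^r$ is abelian, so~$\Inn$ is trivial and~$\Aut = \Out = \GL_r(\bbZ)$.

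For~$c \geq 2$ and~$r \geq 2$, the center~$Z(\rmN^r_c)$ coincides with the last nontrivial lower central series term~$\Gamma_c(\rmN^r_c)$, giving~$\Inn(\rmN^r_c) \cong \rmN^r_c/Z(\rmN^r_c) \cong \rmN^r_{c-1}$. Since the action of~$\Aut(\rmN^r_c)$ on~$V(\bbZ^r)$ factors through the abelianization~$\Aut(\rmN^r_c) \twoheadrightarrow \GL_r(\bbZ)$, the subgroup~$\Inn(\rmN^r_c)$ acts trivially on the coefficients, and the~$E^2$--page reads
\[
E^2_{p,q} = \rmH_p\bigl(\Out(\rmN^r_c);\, \rmH_q(\rmN^r_{c-1}; \bbQ) \otimes V(\bbZ^r)\bigr) \Longrightarrow \rmH_{p+q}(\Aut(\rmN^r_c); V(\bbZ^r)).
\]
The homomorphism in the theorem is then the edge homomorphism at~$q = 0$.

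The structural input I would exploit is that~$\rmH_q(\rmN^r_{c-1}; \bbQ)$, computed from the Chevalley--Eilenberg complex of the truncated free nilpotent Lie algebra on~$\bbQ^r$, is the value at~$\bbZ^r$ of a polynomial functor on~$\ab$. Each coefficient system~$\rmH_q(\rmN^r_{c-1}; \bbQ) \otimes V(\bbZ^r)$ is therefore of the form~$U_q(\bbZ^r)$ for a polynomial functor~$U_q$ on~$\ab$, with~$U_0 = V$, so every row of the~$E^2$--page falls within the scope of Theorem~\ref{thm:Nil_c}. To turn this into the desired isomorphism I would compare our spectral sequence with the degenerate one attached to the trivial extension~$1 \to 1 \to \GL_r(\bbZ) \to \GL_r(\bbZ) \to 1$, via the map of extensions induced by the abelianization homomorphisms~$\Aut \twoheadrightarrow \GL_r$ and~$\Out \twoheadrightarrow \GL_r$. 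Theorem~\ref{thm:Nil_c} identifies the abutments in the stable range; tracking the associated filtrations forces~$E^\infty_{p,q} = 0$ for~$p + q$ in the stable range with~$q \geq 1$, so that the bottom filtration quotient~$E^\infty_{n,0}$ recovers both~$\rmH_n(\Aut(\rmN^r_c); V(\bbZ^r))$ and~$\rmH_n(\GL_r(\bbZ); V(\bbZ^r))$.

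The main obstacle is the final step: showing that no non-trivial differential leaves the bottom row in the stable range, so that~$E^\infty_{n,0}$ fills out all of~$E^2_{n,0} = \rmH_n(\Out(\rmN^r_c); V(\bbZ^r))$. Since the~$E^2$--entries of the higher rows are generically non-zero (already~$\rmH_1(\rmN^r_{c-1}; \bbQ) \cong \bbQ^r$), this vanishing of the bottom-row differentials has to be extracted from an iterated application of Theorem~\ref{thm:Nil_c} to the polynomial functors~$U_q$, combined with the explicit stability bounds for~$\Aut(\rmN^r_c)$, in order to make the comparison on the~$E^\infty$--page tight enough to conclude.
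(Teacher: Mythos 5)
Your setup coincides with the paper's: the same extension \hbox{$1 \to \Inn(\rmN^r_c) \cong \rmN^r_{c-1} \to \Aut(\rmN^r_c) \to \Out(\rmN^r_c) \to 1$}, the same Lyndon--Hochschild--Serre spectral sequence with untwisted coefficients $\rmH_t(\rmN^r_{c-1};\bbQ)\otimes V(\bbZ^r)$, and the same appeal to the polynomiality of $\rmH_t(\rmN^r_{c-1};\bbQ)$ via the Chevalley--Eilenberg complex (Proposition~\ref{prop:homology_is_polynomial}). But there is a genuine gap precisely where you flag ``the main obstacle.'' Your comparison with the degenerate spectral sequence of $1\to 1\to\GL_r(\bbZ)\to\GL_r(\bbZ)\to 1$ shows that the composite
\[
\rmH_n(\Aut(\rmN^r_c);V)\twoheadrightarrow \rmE^\infty_{n,0}\hookrightarrow \rmE^2_{n,0}=\rmH_n(\Out(\rmN^r_c);V)\longrightarrow\rmH_n(\GL_r(\bbZ);V)
\]
is a stable isomorphism; this does force $\rmE^\infty_{p,q}=0$ for $q\geqslant 1$ and gives (split) injectivity of the edge homomorphism, but it says nothing about surjectivity, i.e., about the differentials $d^k\colon\rmE^k_{n,0}\to\rmE^k_{n-k,k-1}$ leaving the bottom row. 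An ``iterated application of Theorem~\ref{thm:Nil_c} to the $U_q$'' cannot by itself kill these, because the rows of your spectral sequence are homology groups of $\Out(\rmN^r_c)$, not of $\Aut(\rmN^r_c)$, so Theorem~\ref{thm:Nil_c} does not apply to them directly.

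The two missing ideas are the following. First, for $t\geqslant 1$ the coefficient functor $U_t=\rmH_t(\rmN^{(-)}_{c-1};\bbQ)\otimes V$ is \emph{reduced}, since $\rmN^0_{c-1}$ is the trivial group, and by Betley's vanishing theorem (Remark~\ref{rem:vanish1}) the rational stable homology of $\GL_\infty(\bbZ)$ with reduced polynomial coefficients vanishes; Theorem~\ref{thm:Nil_c} transfers this vanishing to $\Aut(\rmN^\infty_c)$. Second, the paper argues by induction on the homological degree $d$: the statement of the theorem in degrees $<d$ transfers that vanishing further from $\Aut(\rmN^r_c)$ to $\Out(\rmN^r_c)$ in the stable range, so that the \emph{entire} rows $t\geqslant 1$ of the $\rmE^2$ page already vanish in the relevant total degrees --- your remark that these entries are ``generically non-zero'' is true unstably but false in the stable range. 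With the higher rows gone at $\rmE^2$ there are no differentials left to control and the edge homomorphism is an isomorphism. Your $c=1$ observation is correct but is not the right anchor; the induction is on $d$, started at $d=0$ by the surjectivity of $\Aut(\rmN^r_c)\to\Out(\rmN^r_c)$ on coinvariants.
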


In particular, for all integers~$c\geqslant 1$ the canonical homomorphisms
\[
\Aut(\rmN^r_c)\rightarrow\Out(\rmN^r_c)
\]
of groups induce isomorphisms in rational stable homology. 

%%%

The following result is Theorem~\ref{thm:mcgs} in the main text. Starting from the isomorphisms~\hbox{$\Out(\rmN^r_c)\cong\pi_0\rmG(\rmX^r_c)$} of groups, it allows us to extend the preceding results from the homotopy category to the mapping class groups (groups of components of the automorphism groups) in the~$\TOP$,~$\PL$, and~$\DIFF$ categories.

\begin{theorem}\label{thm:TOP_PL_DIFF}
For all integers~$c\geqslant1$, if the nil-manifold~$\rmX^r_c$ is of dimension at least~$5$, the canonical homomorphisms
\[
\pi_0\DIFF(\rmX^r_c)\longrightarrow
\pi_0\PL(\rmX^r_c)\longrightarrow
\pi_0\TOP(\rmX^r_c)\longrightarrow
\pi_0\rmG(\rmX^r_c)
\]
of discrete groups induce isomorphisms in rational homology.
\end{theorem}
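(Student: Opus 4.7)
The plan is to treat the three arrows in the chain separately, establishing each as a rational homology equivalence of discrete groups, and then combining them. Surgery theory will handle the right-hand comparison $\pi_0\TOP\to\pi_0\rmG$, while classical smoothing and triangulation theory deal with the two arrows on the left. Throughout, the key fact is that $\rmN^r_c$ is a finitely generated torsion-free nilpotent group, hence in particular polycyclic, so the Farrell-Jones conjecture applies (by the work of Bartels-Farrell-L\"uck and others).

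For the arrows $\pi_0\DIFF(\rmX^r_c)\to\pi_0\PL(\rmX^r_c)\to\pi_0\TOP(\rmX^r_c)$, smoothing theory (Hirsch-Mazur-Morlet) and triangulation theory (Kirby-Siebenmann) identify the kernels and cokernels in terms of obstructions living in cohomology of $\rmX^r_c$ with coefficients in $\PL/\rmO$ and $\TOP/\PL$, respectively. The space $\TOP/\PL$ is an Eilenberg-Mac Lane space $K(\bbZ/2,3)$, hence rationally contractible; the space $\PL/\rmO$ has only finite homotopy groups (the Kervaire-Milnor groups $\Theta_n$), hence is also rationally contractible. Consequently, all rational obstructions vanish, and a Lyndon-Hochschild-Serre argument applied to the short exact sequences of discrete groups arising from these comparisons delivers the desired rational homology equivalences.

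For the right-hand arrow $\pi_0\TOP(\rmX^r_c)\to\pi_0\rmG(\rmX^r_c)\cong\Out(\rmN^r_c)$, the key tool is the surgery exact sequence
\[
\pi_1\calS^{\TOP}(\rmX^r_c)\longrightarrow\pi_0\TOP(\rmX^r_c)\longrightarrow\pi_0\rmG(\rmX^r_c)\longrightarrow\pi_0\calS^{\TOP}(\rmX^r_c),
\]
where $\calS^{\TOP}$ denotes the topological (block) structure space. The Farrell-Jones conjecture for $\rmN^r_c$ implies the topological Borel rigidity of the aspherical manifold $\rmX^r_c$, so $\pi_0\calS^{\TOP}(\rmX^r_c)=0$. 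The kernel of $\pi_0\TOP\to\pi_0\rmG$ is controlled by the L-theory of $\bbZ[\rmN^r_c]$, which Farrell-Jones identifies rationally with $\rmH_*(B\rmN^r_c;\mathbf{L}(\bbZ))\otimes\bbQ$ via the assembly map. A second Lyndon-Hochschild-Serre argument with rational coefficients then yields the comparison of rational homologies.

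The main obstacle is precisely this right-hand arrow: one must verify that the kernel of $\pi_0\TOP(\rmX^r_c)\to\pi_0\rmG(\rmX^r_c)$ contributes trivially to the rational group homology, which requires both the full strength of the Farrell-Jones machinery for polycyclic groups and careful bookkeeping of the L-theoretic assembly contributions. The hypothesis $\dim\rmX^r_c\geqslant 5$ is essential here, as it is the standard dimensional assumption under which surgery theory and the Borel conjecture (via Farrell-Jones) apply. Once this comparison is established, concatenating the three rational homology equivalences produces the claimed chain of isomorphisms.
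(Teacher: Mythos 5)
Your overall architecture (surgery theory for the right-hand arrow, rational triviality of $\TOP/\PL$ and $\PL/\rmO$ for the two left-hand arrows) matches the paper's, and invoking the Farrell--Jones conjecture for the polycyclic group $\rmN^r_c$ is a legitimate substitute for the paper's appeal to Wall's explicit treatment of poly-$\bbZ$ fundamental groups in~\cite[15B]{Wall}. But there is a genuine gap: throughout, you conflate the \emph{block} automorphism groups $\widetilde\CAT(\rmX^r_c)$ with the honest automorphism groups $\CAT(\rmX^r_c)$. The surgery exact sequence you write down computes $\pi_0\widetilde\TOP(\rmX^r_c)$, not $\pi_0\TOP(\rmX^r_c)$; likewise, the identifications of the differences between categories with mapping spaces into $\TOP/\PL$ and $\PL/\rmO$ are statements about the quotients $\widetilde\TOP/\widetilde\PL$ and $\widetilde\PL/\widetilde\DIFF$ of block groups. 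Borel rigidity (whether obtained from Farrell--Jones or from Wall) gives contractibility of $\rmG(\rmX^r_c)/\widetilde\TOP(\rmX^r_c)$ and so settles the blocked comparison, but it says nothing about the remaining discrepancy $\pi_1(\widetilde\CAT(\rmX^r_c)/\CAT(\rmX^r_c))$, which is governed not by L-theory but by concordance/pseudo-isotopy theory, that is, by Waldhausen's A-theory. This is precisely the step your sketch omits and the paper supplies: by work of Hatcher and Igusa~(\cite{Hatcher}, \cite{Igusa}) this group is dominated by $\pi_0$ of the concordance space of $\rmX^r_c$, which--using Waldhausen's vanishing of the Whitehead spaces for poly-$\bbZ$ groups--is a quotient of the $2$-torsion group $\bbF_2[\rmN^r_c]/\bbF_2$ and hence rationally invisible; moreover this group is independent of the category $\CAT$, which is what allows one to pass from the blocked kernels to the honest ones in the $\PL$ and $\DIFF$ cases as well. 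Without this input the theorem is not proved.

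A secondary inaccuracy: you describe the kernel of $\pi_0\TOP(\rmX^r_c)\to\pi_0\rmG(\rmX^r_c)$ as identified rationally with $\rmH_*(B\rmN^r_c;\mathbf{L}(\bbZ))\otimes\bbQ$ via assembly. That expression is the group of normal invariants, and the content of rigidity is exactly that it is cancelled against the surgery obstruction groups, so that the entire block structure space (all of its homotopy groups, not merely $\pi_0$) is trivial; the L-theoretic contribution to the kernel is therefore zero outright, and no further bookkeeping of assembly contributions is needed there. The only rationally relevant part of the kernel is the A-theoretic one described above.
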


Taken together, Theorem~\ref{thm:Nil_c}, Theorem~\ref{thm:Out}, and Theorem~\ref{thm:TOP_PL_DIFF} compute the rational stable homology of the mapping class groups of the universal nil-manifolds~$\rmX^r_c$, justifying the title.

%%%

Stable homology computations such as those in the present paper produce most impact in the presence of homological stability: This guarantees that the stable homology determines infinitely many unstable values. Luckily, such results are already in place for the groups of interest here: Rational homological stability for the general linear groups over the ring~$\bbZ$ of integers is due to Borel~\cite{Borel}. With integral coefficients, the result is due to Charney~\cite{Cha80}, Maazen (unpublished), van der Kallen~\cite{vdK80}, and Suslin~\cite{Sus82}. Homological stability results for the automorphism groups of free groups are more recent than those for the general linear groups, see~\cite{Hat95} and~\cite{Hatcher+Vogtmann:Cerf}. They have been revisited by Bestvina~\cite{Bestvina} lately. For the automorphism groups of free nilpotent groups of an arbitrary class~$c$, the author has proven homological stability in~\cite{Szymik}. In none of theses cases is the exact stable range known, and we will not reproduce the estimates from the references. Our goal is the computation of the stable homology, and the mere existence of bounds for homological stability is enough to reach it.

An outline of this paper is as follows. Section~\ref{sec:homalg} contains the necessary background from homological algebra in functor categories and
provides for a basic rational vanishing result, Proposition~\ref{prop:rational_vanishing_c}. Section~\ref{sec:stable_K-theory} is about stable K-theory and recalls Scorichenko's theorem. In Section~\ref{sec:groups} we review the free nilpotent groups and their automorphisms. Section~\ref{sec:proofs} provides a proof of Theorem~\ref{thm:Nil_c}. Sections~\ref{sec:out} and~\ref{sec:mcg} contain the applications to the outer automorphism groups and the~$\TOP$,~$\PL$, and~$\DIFF$ mapping class groups of the associated aspherical nil-manifolds, respectively.

%%%

\section{Functor homology}\label{sec:homalg}

In this section, we briefly review some homological algebra in functor (and bifunctor) categories to the extent that we will need in the following sections. We also prove Proposition~\ref{prop:rational_vanishing_c}, a rational vanishing result that enters fundamentally into the proof of our main result.

\subsection{Mac Lane homology}

If~$\bfC$ is a small (or essentially small) category, then we write~$\calF(\bfC)$ for the category of functors~$\bfC\to\Ab$ from the category~$\bfC$ to the category~$\Ab$ of all abelian groups. This is an abelian category with enough projective objects to do homological algebra in. Note that we are looking at {\it all} functors, even if the category~$\bfC$ is such that it would make sense for us to consider only additive functors~$\bfC\to\Ab$. For instance, we will be particularly interested in the case when~$\bfC=\ab$ is the essentially small category~$\ab$ of finitely generated free abelian groups. Then we have the inclusion functor~$\rmI\colon\ab\to\Ab$ at our disposal. 

A pair of functors~$E\colon\bfC^\op\to\Ab$ and~$F\colon\bfC\to\Ab$ has an external tensor product, defined by
\begin{equation}\label{eq:external}
(E\boxtimes F)(X,Y)=E(X)\otimes F(Y),
\end{equation}
which gives a bifunctor~$E\boxtimes F\colon\bfC^\op\times\bfC\to\Ab$. Of course, not every bifunctor has this form. But every bifunctor $\bfC^\op\times\bfC\to\Ab$ has a coend, and in the case of~\hbox{$E\boxtimes F$} this is the internal tensor product $E\otimes F$, or sometimes $E\otimes_\bfC F$ for emphasis. This is just an abelian group. The derived functors will be denoted by~$\Tor_*^{\calF(\bfC)}$ or just~$\Tor_*^{\bfC}$. 

The following definition is due to Jibladze and Pirashvili, compare~\cite[Def.~1.2]{Jibladze+Pirashvili}.

\begin{definition}\label{def:HML}
The {\em Mac Lane homology} of the ring~$\bbZ$ of integers with coefficients in a functor~$F\in\calF(\ab)$ is defined as
\[
\HML_*(\bbZ;F)=\Tor_*^{\calF(\ab)}(\rmI^\vee,F),
\]
where~$\rmI^\vee\colon\ab^\op\to\Ab$ is the dual of the inclusion~$\rmI$, so that~$\rmI^\vee(X)=\Hom(X,\bbZ)$.
\end{definition}

\begin{remark}
A functor~$F\in\calF(\ab)$ is additive if and only if it is given (up to isomorphism) by tensoring  with an abelian group~\cite[p.~254]{Jibladze+Pirashvili}. In that case, the Mac Lane homology is the Hochschild homology of the Q-construction~$\rmQ(\bbZ)$ with coefficients in that abelian group, thought of as a~$(\bbZ,\bbZ)$--bimodule, see~\cite[2.1 and~6.2]{Jibladze+Pirashvili}. Also, in that case, the Mac Lane homology agrees with the topological Hochschild homology~(THH) of the Eilenberg--Mac Lane spectrum~$\rmH\bbZ$ with coefficients in the associated bimodule spectrum, see~\cite[Thm.~3.2]{Pirashvili+Waldhausen} and~\cite[Thm.~2.10]{FPSVW}.
\end{remark}

If~$\bbZ$ also denotes the constant functor~\hbox{$\bfC^\op\to\Ab$} with value the group~$\bbZ$ of integers, and~\hbox{$F\colon\bfC\to\Ab$} is any functor, the homology of the category $\bfC$ with coefficients in the functor $F$ is defined as
\begin{equation}\label{def:homology}
\rmH_*(\bfC;F)=\Tor_*^{\calF(\bfC)}(\bbZ,F).
\end{equation}

Both the Mac Lane homology as well as the homology of categories are special cases of the Hoch\-schild~(bifunctor) homology, as we shall now see.

%%%

\subsection{Hochschild (bifunctor) homology}

Let~$D\colon\bfC^\op\times\bfC\to\Ab$ be a bifunctor. For instance, we might have $D=E\boxtimes F$ as in~\eqref{eq:external}. Or we might have $D=\rmJ$, where $\rmJ(X,Y)=\bbZ\bfC(X,Y)$ is the free abelian group on the set $\bfC(X,Y)$ of morphisms $X\to Y$ in $\bfC$. The bifunctor $\rmJ$ is useful even when $\bfC$ is already enriched in abelian groups: The Hochschild homology of the category~$\bfC$ with coefficients in the bifunctor~$D$ is defined as
\[
\HH_*(\bfC;D)=\Tor_*^{\calF(\bfC^\op\times\bfC)}(\rmJ,D).
\]
See Loday~\cite[App.~C.10]{Loday} and Franjou--Pirashvili~\cite[2.5]{Franjou+Pirashvili:Scorichenko}. They use the notation~$\rmH$ for this, but we shall reserve it for the homology of categories with coefficients in a plain functor (rather than a bifunctor) as in~\eqref{def:homology}. The latter is a special case:

\begin{proposition}\label{prop:H_identification}
For all functors~$F\in\calF(\bfC)$ there is an isomorphism
\begin{equation}\label{eq:proj_iso}
\rmH_*(\bfC;F)\cong\HH_*(\bfC;F\circ\pr_2),
\end{equation}
where~$\pr_2\colon\bfC^\op\times\bfC\to\bfC$ is the projection.
\end{proposition}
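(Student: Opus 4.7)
The plan is to realize both sides as the homology of one and the same explicit bar-style chain complex. At the level of zeroth homology the identification is already immediate from the coend formulas,
\[
\rmH_0(\bfC;F)=\colim_\bfC F=\int^X F(X)=\int^X(F\circ\pr_2)(X,X)=\HH_0(\bfC;F\circ\pr_2),
\]
and I would upgrade this to all degrees by comparing explicit projective models.

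For the left-hand side I would resolve the constant functor~$\bbZ\in\calF(\bfC^\op)$ by the standard bar resolution with $n$-chains
\[
B_n=\bigoplus_{X_0,\ldots,X_n}\bbZ\bfC(-,X_0)\otimes\bbZ\bfC(X_0,X_1)\otimes\cdots\otimes\bbZ\bfC(X_{n-1},X_n),
\]
which are sums of representables, hence projective. Applying $-\otimes_\bfC F$ and invoking the Yoneda identification $\bbZ\bfC(-,X_0)\otimes_\bfC F\cong F(X_0)$ produces a complex whose $n$-chains are
\[
\bigoplus_{X_0,\ldots,X_n}F(X_0)\otimes\bbZ\bfC(X_0,X_1)\otimes\cdots\otimes\bbZ\bfC(X_{n-1},X_n),
\]
and whose homology is $\rmH_*(\bfC;F)$. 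Here the face~$d_0$ is induced by $F(f_1)\colon F(X_0)\to F(X_1)$, the intermediate faces compose adjacent morphisms, and~$d_n$ simply forgets the last factor.

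For the right-hand side I would invoke the Hochschild bifunctor complex of~\cite[App.~C]{Loday}, which computes $\HH_*(\bfC;B)$ and has $n$-chains
\[
\bigoplus_{X_0,\ldots,X_n} B(X_n,X_0)\otimes\bfC(X_0,X_1)\otimes\cdots\otimes\bfC(X_{n-1},X_n),
\]
with $d_0$ using the covariant action of~$f_1$ on the second slot of~$B$, $d_n$ using the contravariant action of~$f_n$ on the first slot, and intermediate faces composing. For $B=F\circ\pr_2$ the groups $B(X_n,X_0)$ reduce to $F(X_0)$, so the underlying graded groups match the bar complex above termwise. The covariant second-slot action in~$d_0$ becomes $F(f_1)$, while the contravariant first-slot action in~$d_n$ is trivial because $F\circ\pr_2$ ignores its first argument, so $d_n$ collapses to deletion of the last factor. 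The two complexes are thereby canonically isomorphic, yielding the claimed isomorphism in every degree.

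The main (and essentially only) obstacle is keeping the variance conventions straight for bifunctors $\bfC^\op\times\bfC\to\Ab$, and in particular recognising that the contravariant first-slot action is precisely the one that disappears for $F\circ\pr_2$. Once this bookkeeping is in place the identification of the two complexes is tautological, and no adjunction, Kan-extension, or spectral-sequence machinery is required.
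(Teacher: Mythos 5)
Your argument is correct, but it takes a genuinely different route from the paper. The paper's proof is a one-line reduction: it observes that $F\circ\pr_2=\bbZ\boxtimes F$ with $\bbZ$ the constant contravariant functor, and then invokes the external-product formula $\Tor_*^{\calF(\bfC)}(E,F)\cong\HH_*(\bfC;E\boxtimes F)$ from Franjou--Pirashvili (valid when $E$ takes values in free abelian groups), so that both sides become $\Tor_*^{\calF(\bfC)}(\bbZ,F)$ by definition. You instead prove the special case $E=\bbZ$ of that formula by hand: you resolve the constant functor by the simplicial bar resolution (projective since its terms are direct sums of representables tensored with free abelian groups, exact by the extra degeneracy), apply $-\otimes_\bfC F$ and co-Yoneda to get the nerve-style complex computing $\rmH_*(\bfC;F)$ as derived colimit, and match it face-by-face with the cyclic bar complex for $\HH_*(\bfC;F\circ\pr_2)$, correctly noting that the contravariant face $d_n$ degenerates to deletion because $F\circ\pr_2$ is constant in its first variable. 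Your approach is more self-contained and makes the mechanism visible (the only external input is the standard identification of $\HH_*$ with the cyclic bar complex, which the paper's definition via $\Tor_*^{\calF(\bfC^\op\times\bfC)}(\rmJ,-)$ also requires); the paper's approach is shorter and reuses a citation that it needs anyway for Proposition~\ref{prop:HML_identification} and for the vanishing argument in Proposition~\ref{prop:rational_vanishing_c}, where the full strength of the external-product formula (for $E=(\rmI^\vee)^{\otimes q}$, not just $E=\bbZ$) is essential.
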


\begin{proof}
Note that we can write the composition of functors as~\hbox{$F\circ\pr_2=\bbZ\boxtimes F$}, so that both sides of~\eqref{eq:proj_iso} are isomorphic to~$\Tor_*^{\calF(\bfC)}(\bbZ,F)$ by~\cite[Prop.~2.10 on p.~115]{Franjou+Pirashvili:Scorichenko} which says~\hbox{$\Tor_*^{\calF(\bfC)}(E,F)\cong\HH_*(\bfC;E\boxtimes F)$} in our notation, when $E$ takes values in free abelian groups.
\end{proof}

\begin{proposition}\label{prop:HML_identification}
For all functors~$F\in\calF(\ab)$ there is an isomorphism
\[
\HML_*(\bbZ;F)\cong\HH_*(\ab;\rmI^\vee\boxtimes F),
\]
where~$\rmI^\vee\boxtimes F$ is the bifunctor with~$(\rmI^\vee\boxtimes F)(X,Y)\cong\Hom(X,FY)$. 
\end{proposition}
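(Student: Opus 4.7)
The plan is to mimic the proof of Proposition~\ref{prop:H_identification} almost verbatim, since only the ``constant'' functor~$\bbZ$ on the contravariant side is being replaced by~$\rmI^\vee$, while the target of both functors still lies in (finitely generated) free abelian groups. This means the same result of Franjou--Pirashvili on which Proposition~\ref{prop:H_identification} relies will apply without modification.

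First I would unpack the identification of bifunctors asserted in the statement. By the definition of the external tensor product in~\eqref{eq:external},
\[
(\rmI^\vee\boxtimes F)(X,Y)=\rmI^\vee(X)\otimes F(Y)=\Hom(X,\bbZ)\otimes F(Y).
\]
Since every object~$X$ of~$\ab$ is a finitely generated free abelian group, the canonical evaluation map induces a natural isomorphism~$\Hom(X,\bbZ)\otimes F(Y)\cong\Hom(X,F(Y))$, which gives the claimed formula. This step is routine and bifunctorial in~$(X,Y)\in\ab^\op\times\ab$.

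For the main isomorphism, I would directly invoke the Franjou--Pirashvili identity~\hbox{$\Tor_*^{\calF(\bfC)}(E,F)\cong\HH_*(\bfC;E\boxtimes F)$} cited in the proof of Proposition~\ref{prop:H_identification}, applied to~$\bfC=\ab$ and~$E=\rmI^\vee$. Its hypothesis is that~$E$ take values in free abelian groups, which is satisfied because~$\rmI^\vee(X)=\Hom(X,\bbZ)$ is finitely generated free for every~$X\in\ab$. Combining with Definition~\ref{def:HML} yields
\[
\HML_*(\bbZ;F)=\Tor_*^{\calF(\ab)}(\rmI^\vee,F)\cong\HH_*(\ab;\rmI^\vee\boxtimes F),
\]
which is the desired isomorphism.

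There is essentially no obstacle here: the only substantive ingredient beyond the previous proposition is the verification that~$\rmI^\vee$ takes values in free abelian groups, and the pointwise identification of~$\rmI^\vee\boxtimes F$ with~$\Hom(-,F(-))$. Both are immediate from the fact that~$\ab$ consists of finitely generated free abelian groups, so the whole argument reduces to one invocation of the cited formula.
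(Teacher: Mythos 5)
Your proposal is correct and matches the paper's own proof, which likewise identifies both sides with $\Tor_*^{\calF(\ab)}(\rmI^\vee,F)$ — the left by Definition~\ref{def:HML} and the right by the same Franjou--Pirashvili result used for Proposition~\ref{prop:H_identification}. The extra details you supply (that $\rmI^\vee$ takes finitely generated free values and that $\Hom(X,\bbZ)\otimes F(Y)\cong\Hom(X,F(Y))$ for $X$ finitely generated free) are accurate and only make explicit what the paper leaves implicit.
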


\begin{proof}
Both sides are isomorphic to~$\Tor_*^{\calF(\ab)}(\rmI^\vee,F)$: the left hand side by Definition~\ref{def:HML}, and the right hand side by~\cite[Prop.~2.10 on p.~115]{Franjou+Pirashvili:Scorichenko} again. Alternatively, see~\cite[13.1.7, 13.2.17]{Loday}.
\end{proof}

%%%

\subsection{A rational vanishing result}

The following rational vanishing result will be the main computational input in our proof of Theorem~\ref{thm:Nil_c}. It refers to the Lie functors~$\Lie^b\colon\ab\to\Ab$, that is the degree~$b$ homogeneous part of the free Lie algebra functor. We will abbreviate
\[
\Lie^{[a,c]}=\bigoplus_{b=a}^c\Lie^b,
\]
so that~$\Lie^{[1,c]}(\bbZ^r)$ is nothing but the free nilpotent Lie algebra of class~$c$ on~$r$ generators.

\begin{proposition}\label{prop:rational_vanishing_c}
For all~$c\geqslant 2$ and~$q\geqslant1$, the Hochschild homology groups
\[
\HH_*(\ab;(\rmI^\vee\boxtimes\Lie^{[2,c]})^{\otimes q}\otimes V)
\]
vanish whenever $V$ is a functor on $\ab$ with values in rational vector spaces.
\end{proposition}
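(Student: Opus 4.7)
The plan is to reduce the claim to a rational degree-mismatch argument for Tor in the functor category~$\calF(\ab)$. First I would rewrite the bifunctor by gathering the contravariant and covariant factors (viewing~$V$ as~$V\circ\pr_2$),
\[
(\rmI^\vee\boxtimes\Lie^{[2,c]})^{\otimes q}\otimes V\;\cong\;(\rmI^\vee)^{\otimes q}\boxtimes\bigl((\Lie^{[2,c]})^{\otimes q}\otimes V\bigr).
\]
Because~$(\rmI^\vee)^{\otimes q}$ takes values in free abelian groups, the Franjou--Pirashvili identification already invoked in the proofs of Propositions~\ref{prop:H_identification} and~\ref{prop:HML_identification} converts the Hochschild bifunctor homology into a standard Tor,
\[
\HH_*(\ab;(\rmI^\vee)^{\otimes q}\boxtimes G)\;\cong\;\Tor^{\calF(\ab)}_*\bigl((\rmI^\vee)^{\otimes q},G\bigr),
\]
with~$G=(\Lie^{[2,c]})^{\otimes q}\otimes V$. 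Rationalising, the task becomes a Tor vanishing statement in~$\calF(\ab;\bbQ)$.

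Next I would decompose the Lie factor into its homogeneous pieces. Since~$\Lie^{[2,c]}=\bigoplus_{b=2}^{c}\Lie^b$, its~$q$-fold tensor power splits as a direct sum of homogeneous polynomial functors~$\Lie^{b_1}\otimes\cdots\otimes\Lie^{b_q}$ of degree~$|b|=b_1+\cdots+b_q\geqslant 2q$. Since~$2q>q$, the problem reduces to showing that
\[
\Tor^{\calF(\ab)}_*\bigl((\rmI^\vee)^{\otimes q},L\otimes V\bigr)\otimes_\bbZ\bbQ\;=\;0
\]
whenever~$L$ is a homogeneous polynomial functor of degree~$d>q$ and~$V$ is an arbitrary functor on~$\ab$ with values in rational vector spaces.

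The key input is rational degree-orthogonality, which I would establish via the central scalar-multiplication endomorphisms~$[n]\colon X\to X$ in~$\ab$. Such an~$[n]$ acts on the Tor group by~$n^q$ through the contravariant factor~$(\rmI^\vee)^{\otimes q}$ and by~$n^{d}V(n)$ through the covariant factor~$L\otimes V$, and the naturality of the coend, together with its derived counterpart, identifies these two actions. Rationally, inverting the non-zero~$n$ then identifies every class with~$n^{d-q}V(n)$ applied to itself; running the resulting eigenvalue equations simultaneously over varying~$n$ and passing to the Adams-operation/Taylor decomposition of functors in~$\calF(\ab;\bbQ)$ forces the Tor to vanish because~$d\neq q$. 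I expect the main obstacle to be this last step: one has to formulate the degree-mismatch argument cleanly at the level of derived functors and verify that the arbitrariness of~$V$ cannot interfere with the vanishing, which is driven entirely by the polynomial factor~$L$ carrying degree strictly larger than~$q$.
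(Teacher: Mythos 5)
Your reduction to
\(\Tor^{\calF(\ab)}_*\bigl((\rmI^\vee)^{\otimes q},(\Lie^{[2,c]})^{\otimes q}\otimes V\bigr)\)
via the Franjou--Pirashvili identification is fine, and the idea of exploiting the central endomorphisms \([n]\) of \(\Id_\ab\) (which do act compatibly through either variable of \(\Tor\), by the usual centre argument on the coend and a projective resolution) is a legitimate and genuinely different strategy from the paper's. The paper instead uses that each \(\Lie^b\) is rationally a retract of \(\bigotimes^b\) (Schlesinger), kills \(\bigotimes^b\otimes V\) for \(b\geqslant 2\) by Pirashvili's vanishing lemma (a tensor product of two \emph{reduced} functors has trivial Mac Lane homology), and handles \(q\geqslant 2\) by a K\"unneth reduction to \(q=1\). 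Your eigenvalue argument would indeed give a clean alternative proof when \(V\) is \emph{polynomial}: rationally such a \(V\) splits into homogeneous pieces, every summand of the coefficient functor is then homogeneous of some degree \(d\geqslant 2q>q\), and \((n^{d}-n^{q})\) annihilates the Tor group for all \(n\), forcing rational vanishing. Since the application in the proof of Theorem~\ref{thm:Nil_c} only uses polynomial \(V\), that version would suffice there.

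However, as a proof of the proposition as stated the argument has a genuine gap, exactly at the point you flag. For an arbitrary functor \(V\) with values in rational vector spaces there is no Adams/Taylor decomposition into homogeneous pieces (arbitrary functors on \(\ab\) need not be polynomial, nor filtered colimits of polynomial functors), so you cannot replace \(V([n])\) by a scalar. What the central-element argument actually yields is the operator identity \(n^{q}=n^{d}\,V([n])_*\) on the Tor group, i.e.\ \(V([n])_*\xi=n^{q-d}\xi\) for every class \(\xi\). This system of equations is multiplicatively consistent (\(V([nm])_*=V([n])_*V([m])_*\) is compatible with \((nm)^{q-d}=n^{q-d}m^{q-d}\), and \(n=0,\pm1\) give no information because \(L([0])=0\)), so it does not by itself force \(\xi=0\); the degree mismatch \(d\neq q\) only bites once both sides act by \emph{known} scalars. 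To close the gap you would need an additional input controlling how \(V([n])\) can act on these Tor groups --- and the natural such input is precisely Pirashvili's vanishing lemma, which absorbs an arbitrary extra tensor factor \(V\) because it only requires the two factors \(T_1\otimes T_2\) to be reduced, not polynomial. So either restrict the statement to polynomial \(V\) (enough for the paper's main theorem) or replace the final step by the retract-plus-Pirashvili argument.
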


\begin{proof}
We will assume that the functor~$V$ is constant, so that we are dealing with the groups~$\HH_*(\ab;(\rmI^\vee\boxtimes\Lie^{[2,c]})^{\otimes q})$ rationally. The same argument gives the result in the general case, inserting~$\otimes V$ in suitable places.

We can check the first case~$q=1$ directly, because
\[
\HH_*(\ab;\rmI^\vee\boxtimes\Lie^{[2,c]})\cong\HML_*(\bbZ;\Lie^{[2,c]})
\]
is isomorphic to the Mac Lane homology of the ring~$\bbZ$ of integers with coefficients in the functor~$\Lie^{[2,c]}$ by Proposition~\ref{prop:HML_identification}. The Lie functors~$\Lie^b$ are rationally retracts of the tensor powers~$\bigotimes^b$: There are morphisms
\[
\Lie^b\longrightarrow\textstyle{\bigotimes^b}\longrightarrow\Lie^b
\]
such that their composition is multiplication with~$b$~\cite[Prop.~3.3]{Schlesinger}. Since the functor~$\bigotimes^b$ has vanishing Mac Lane homology by Pirashvili's Lemma~(compare~\cite[Lem.~2.2]{Franjou+Pirashvili}), so have the functors~$\Lie^b$, and therefore also their direct sum
\[
\Lie^{[2,c]}=\bigoplus_{b=2}^c\Lie^b.
\]
This proves the claim in the case~$q=1$.

To finish the proof, we reduce the case~$q\geqslant2$ to the previous one: There is an obvious isomorphism
\[
\big(\rmI^\vee\boxtimes\Lie^{[2,c]}\big)^{\otimes q}\cong
\big(\rmI^\vee\big)^{\otimes q}\boxtimes\big(\Lie^{[2,c]}\big)^{\otimes q},
\]
and by another invocation of~\cite[Prop.~2.10 on p.~115]{Franjou+Pirashvili:Scorichenko} we have
\[
\HH_*(\ab;(\rmI^\vee)^{\otimes q}\boxtimes(\Lie^{[2,c]})^{\otimes q})\cong\Tor_*^{\calF(\ab)}((\rmI^\vee)^{\otimes q},(\Lie^{[2,c]})^{\otimes q}).
\]
The right hand side can be computed, using the K\"unneth theorem for $\Tor$, from
\[
\Tor_*^{\calF(\ab)}(\rmI^\vee,\Lie^{[2,c]})^{\otimes q},
\]
and it therefore suffices to show that this is zero. But, working backwards, we already know that this is isomorphic to
\[
\HH_*(\ab;\rmI^\vee\boxtimes\Lie^{[2,c]})^{\otimes q}\cong\HML_*(\bbZ;\Lie^{[2,c]})^{\otimes q},
\]
and this vanishes as explained for $q=1$.
\end{proof}

\begin{remark}\label{rem:FP}
In the case~$c=2$ and~$q=1$ of the preceding result, the Mac Lane cohomology of the ring of integers with coefficients in the functor~$\Lie^2=\Lambda^2$ has been computed integrally by Franjou and Pirashvili~\cite[Cor.~2.3]{Franjou+Pirashvili}. It is all~$2$--torsion.
\end{remark}

%%%

\section{Stable K-theory}\label{sec:stable_K-theory}

There is a simple trick (due to Waldhausen, compare~\cite[Sec.~6]{Waldhausen:topII}) that helps us compute the stable homology
\begin{equation}\label{eq:coeffs}
\rmH_*(\GL_\infty(\bbZ);D_\infty)=\colim_r\rmH_*(\GL_r(\bbZ);D(\bbZ^r,\bbZ^r))
\end{equation}
of the general linear groups with coefficients in a module~$D_\infty$ that comes from a bifunctor~\hbox{$D\colon\ab\times\ab^\op\to\Ab$}: We can use the Serre spectral sequence for the fibration
\[
X\longrightarrow
\BGL_\infty(\bbZ)\longrightarrow
\BGL_\infty(\bbZ)^+,
\]
where the space~$X$ is the homotopy fiber of the indicated Quillen plus construction. The groups on the~$\rmE^2$ page take the form~$\rmH_*(\BGL_\infty(\bbZ)^+;\rmH_*(X;D_\infty))$, where the coefficients~$\rmH_*(X;D_\infty)$ are the homology of the fiber~$X$ with coefficients in the~$\pi_1(X)$--module~\hbox{$D_\infty=\colim_rD(\bbZ^r,\bbZ^r)$}.
The fundamental group~$\pi_1(X)$ acts on~$D_\infty$ via the homomorphism
$\pi_1(X)\to\pi_1(\BGL_\infty(\bbZ))\cong\GL_\infty(\bbZ)$.

\begin{remark}
The action of the fundamental group of~$\BGL_\infty(\bbZ)^+$ on these coefficients~$\rmH_*(X;D_\infty)$ is now trivial, see~\cite[Thm.~3.1]{Kassel:stab} and \cite[Pf.~of Thm.~2.16]{Kassel:calcul}. We will not need this fact, since we will show that in our situation the coefficients itself will be trivial.
\end{remark}

Since the Quillen plus construction is an acyclic map, it induces homology isomorphisms with respect to all coefficients, twisted and untwisted. Therefore, we can remove the plus to get a spectral sequence
\begin{equation}\label{eq:ss_Waldhausen_1}
\rmE^2_{s,t}\Longrightarrow\rmH_{s+t}(\GL_\infty(\bbZ);D_\infty),
\end{equation}
with
\begin{equation}\label{eq:ss_Waldhausen_2}
\rmE^2_{s,t}=\rmH_s(\GL_\infty(\bbZ);\rmK^\st_t(\bbZ;D))
\end{equation}
where, by definition,
\begin{equation}\label{eq:defKst}
\rmK^\st_*(\bbZ;D)=\rmH_*(X;D_\infty)
\end{equation}
is the {\em stable K-theory} of the ring of integers with coefficients in the bifunctor~$D$. 

\begin{remark}
B\"okstedt pointed out that this spectral sequence often degenerates. See also~\cite[Pf.~of Thm.~2]{Betley+Pirashvili}. Again, this will be obvious in our situation, because the entire~$\rmE^2$ page will be trivial.
\end{remark}

The definition~\eqref{eq:defKst} of stable K-theory is topological in nature. We will need the following result that gives an algebraic description of it. 

\begin{theorem}[{\bf Scorichenko}]\label{thm:Scorichenko}
For any ring~$R$, let~$\mod_R$ be the category of finitely generated projective left~$R$--modules, and let~$D$ be a bifunctor on it. If~$D$ has finite degree with respect to each of the variables, then there is an isomorphism between Waldhausen's stable~K-theory and the Hochschild homology of the category~$\mod_R$:
\[
\rmK^\st_*(R;D)\cong\HH_*(\mod_R;D).
\] 
\end{theorem}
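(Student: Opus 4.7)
The plan is to reduce the isomorphism from general finite-degree bifunctors to the biadditive case, where it becomes the Pirashvili--Waldhausen / Dundas--McCarthy identification of stable K-theory with topological Hochschild homology. Throughout, I would treat both sides as functors of the bifunctor variable~$D$ and exploit their compatibility with natural decompositions of~$D$.

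First I would construct a natural chain-level comparison map
\[
\HH_*(\mod_R;D)\longrightarrow\rmK^\st_*(R;D),
\]
functorial in~$D$, using the bar complex computing the left-hand side and a geometric model for the homotopy fiber~$X$ of the plus construction on the right (for instance, Waldhausen's~$S_\bullet$-construction or a group-completion model). Next I would verify the comparison when~$D$ is biadditive, i.e.\ linear in each variable and hence essentially determined by the~$(R,R)$--bimodule~$M=D(R,R)$. On the right-hand side, an argument parallel to Proposition~\ref{prop:HML_identification} identifies $\HH_*(\mod_R;D)$ with the topological Hochschild homology $\mathrm{THH}_*(R;M)$. On the left-hand side, the identification $\rmK^\st_*(R;D)\cong\mathrm{THH}_*(R;M)$ is the theorem of Pirashvili--Waldhausen, generalised by Dundas--McCarthy to arbitrary rings.

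The main step is then to extend from biadditive to polynomial bifunctors of arbitrary finite degree. The strategy is to resolve a polynomial~$D$ by bifunctors of ``representable'' type, namely external tensor products of standard projectives on $\mod_R$ and $\mod_R^\op$, and to run a spectral-sequence argument comparing the two sides termwise. The cross-effect calculus of polynomial functors reduces all higher cross-effects of~$D$ to multilinear pieces, and multilinear bifunctors can in turn be built from biadditive ones in each pair of variables, bringing the previous step to bear.

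The hard part is ensuring that the topologically defined left-hand side respects these algebraic decompositions. The functor~$D\mapsto\HH_*(\mod_R;D)$ is manifestly exact in~$D$, whereas~$D\mapsto\rmK^\st_*(R;D)$ is built from group homology of an infinite-dimensional space with twisted coefficients, and cross-effect decompositions of the coefficient bifunctor do not obviously pass to decompositions of the answer. Scorichenko's central insight is to reinterpret~$\rmK^\st_*(R;D)$ within a Dold--Puppe framework in which naturality in~$D$ becomes manifest, after which the extension from the biadditive case is formal. This topological-to-algebraic translation is where the real work lies; the rest of the proof amounts to bookkeeping with spectral sequences and cross-effects.
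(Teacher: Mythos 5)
The paper does not prove this statement: it is imported verbatim as Scorichenko's theorem, with the proof delegated entirely to the Franjou--Pirashvili exposition \cite{Franjou+Pirashvili:Scorichenko} (where it appears as Theorem~1.1), together with the remark that the argument given there is complete for rings, such as~$\bbZ$, over which submodules of finitely generated projectives are again finitely generated projective. So there is no in-paper proof to compare yours against, and the only fair benchmark is the cited source.

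Measured against that, your proposal is a reasonable roadmap but not a proof, and the gap sits exactly where you place it yourself. The biadditive base case is fine modulo the large theorems you invoke (Pirashvili--Waldhausen, Dundas--McCarthy), but the passage from biadditive to general finite-degree bifunctors is the entire content of Scorichenko's contribution, and your sketch handles it with two moves that do not work as stated. First, ``multilinear bifunctors can be built from biadditive ones in each pair of variables'' conflates a multilinear functor of~$2n$ variables with its precomposition along the diagonal, which is what actually appears in the cross-effect decomposition of~$D$; the bifunctor homology of such a diagonal restriction is \emph{not} determined by the biadditive case (this is the same phenomenon that makes~$\HML_*(\bbZ;\otimes^b)$ require Pirashvili's vanishing lemma rather than a THH computation). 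Second, the assertion that after a Dold--Puppe reinterpretation ``the extension from the biadditive case is formal'' is precisely the claim to be proven: the difficulty is that~$\rmK^\st_*(R;-)$, defined via twisted group homology of the fiber of the plus construction, is not visibly exact or resolution-friendly in~$D$, and establishing that it satisfies the same cross-effect/d\'evissage axioms as~$\HH_*(\mod_R;-)$ is the theorem (via Betley--Pirashvili's derived-functor description of stable K-theory and Scorichenko's induction on polynomial degree). Naming where the real work lies is not the same as doing it, so as a proof this remains incomplete; as a summary of the architecture of the known proof it is serviceable.
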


See the exposition~\cite{Franjou+Pirashvili:Scorichenko} by Franjou and Pirashvili. The cited result is stated as Theorem~1.1 there, and the proof given is complete for rings~$R$ with the property that submodules of finitely generated projective left~$R$--modules are still finitely generated and projective. This property is satisfied for~$R=\bbZ$, when~$\mod_\bbZ=\ab$, which is the only case that we will be using here. See also~\cite[5.2]{Djament} for an enlightening discussion, and~\cite[App.]{FFSS}.

%%%

\section{Free nilpotent groups and their automorphisms}\label{sec:groups}

In this section, we present some basic results about the free nilpotent groups and their automorphisms. Most of this must be well-known, and we can refer to the exposition in~\cite{Szymik}, for instance. We will, however, prove the two Propositions~\ref{prop:homology_is_polynomial} and~\ref{prop:direct_summand} which will later be used.

Let~$G$ be a (discrete) group. For integers~$n\geqslant1$, the subgroups~$\Gamma_n(G)$ are defined inductively by~$\Gamma_1(G)=G$ and~$\Gamma_{n+1}(G)=[G,\Gamma_n(G)]$. We also set~$\Gamma_\infty(G)$ to be the intersection of all the~$\Gamma_n(G)$. This gives a series
\[
G=\Gamma_1(G)\geqslant\Gamma_2(G)\geqslant\dots\geqslant\Gamma_\infty(G)
\]
of normal subgroups, the {\em descending/lower central series} of~$G$. The associated graded group is abelian, and the commutator bracket induces the structure of a graded Lie algebra~$\Lie(G)$ on it. 

Let~$\rmF_r$ denote a free group on a set of~$r$ generators. In this case, an old theorem of Magnus says that the subgroup~$\Gamma_\infty(\rmF_r)$ is trivial, so that the free groups~$\rmF_r$ are {\em residually nilpotent}. By Witt's theorem, the canonical homomorphism from a free Lie algebra on a set of~$r$ generators to the associated graded Lie algebra of~$\rmF_r$ is an isomorphism. In particular 
\begin{equation}\label{eq:Witt2}
\Gamma_n(\rmF_r)/\Gamma_{n+1}(\rmF_r)\cong\Lie^n(\bbZ^r)
\end{equation}
is a free abelian group, the degree~$n$ homogeneous part of the free Lie algebra on a set of~$r$ generators. For instance, we have~$\Lie^1=\Id$ and~$\Lie^2=\Lambda^2$.

%%%

\subsection{Free nilpotent groups}

The universal examples of nilpotent groups of class~$c\geqslant1$ are the quotients
\[
\rmN_c^r=\rmF_r/\Gamma_{c+1}(\rmF_r).
\] 
As the two extreme cases, we obtain~$\rmN^r_1\cong\bbZ^r$ and~$\rmN^r_\infty\cong\rmF_r$. It follows from~\eqref{eq:Witt2} that there are extensions
\begin{equation}\label{eq:Nextension}
0\longrightarrow\Lie^c(\bbZ^r)\longrightarrow\rmN_c^r\longrightarrow\rmN_{c-1}^r\longrightarrow1
\end{equation}
of groups.

%%%

\subsection{Rational homology}

We will later need the following structural result on the rational homology of the groups~$\rmN_c^r$.

\begin{proposition}\label{prop:homology_is_polynomial}
For every class~$c\geqslant1$ and every degree~$d$, there is a polynomial functor~$P$, depending on~$c$ and~$d$, and of degree at most~$cd$, from~$\ab$ to rational vector spaces, such that~$\rmH_d(\rmN_c^r;\bbQ)\cong P(\bbZ^r)$ for all~$r$.
\end{proposition}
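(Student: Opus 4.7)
The plan is induction on the nilpotency class~$c$, exploiting the Lyndon--Hochschild--Serre spectral sequence of the central extension~\eqref{eq:Nextension} as the step from class~$c-1$ to class~$c$. The base case $c=1$ is immediate, since $\rmN_1^r\cong\bbZ^r$ has rational homology $\Lambda^d_\bbQ(\bbQ^r)$, realised by the polynomial functor $\Lambda^d(-\otimes\bbQ)$ of degree $d=1\cdot d$.

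For the inductive step, the extension~\eqref{eq:Nextension} is central because $\Lie^c(\bbZ^r)=\Gamma_c(\rmN_c^r)$ and $[\rmN_c^r,\Gamma_c(\rmN_c^r)]\subseteq\Gamma_{c+1}(\rmN_c^r)=1$. The associated LHS spectral sequence thus reads
\[
\mathrm{E}^2_{p,q}=\rmH_p(\rmN_{c-1}^r;\bbQ)\otimes_\bbQ\Lambda^q_\bbQ\bigl(\Lie^c(\bbZ^r)\otimes\bbQ\bigr)\Longrightarrow\rmH_{p+q}(\rmN_c^r;\bbQ),
\]
with trivial action on coefficients. By the inductive hypothesis, the first tensor factor is the value at~$\bbZ^r$ of a polynomial functor of degree at most $(c-1)p$, and $\Lambda^q\circ\Lie^c$ is polynomial of degree~$cq$. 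Hence $\mathrm{E}^2_{p,q}$ comes from a polynomial functor of degree at most $(c-1)p+cq$, which for $p+q=d$ is bounded by~$cd$, the maximum being attained at $p=0$, $q=d$.

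The whole spectral sequence is natural in~$\bbZ^r$ because $\Lie^c$ and the lower central series quotients are functorial on~$\ab$, so its differentials are morphisms of polynomial functors. Since the category of polynomial functors from~$\ab$ to rational vector spaces of bounded degree is semisimple, and therefore closed under subquotients and split extensions, each $\mathrm{E}^\infty_{p,q}$ with $p+q=d$ remains polynomial of degree at most~$cd$, and the filtration on $\rmH_d(\rmN_c^r;\bbQ)$ splits. Setting $P=\bigoplus_{p+q=d}\mathrm{E}^\infty_{p,q}$ yields the required polynomial functor.

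The delicate point is verifying functoriality of the spectral sequence on all of~$\ab$, not merely on its isomorphisms, which is what turns the isomorphism-class statement into a statement about a genuine polynomial functor. A clean way to sidestep this is to invoke the Mal'cev/Nomizu identification $\rmH_*(\rmN_c^r;\bbQ)\cong\rmH^{\mathrm{Lie}}_*(\Lie^{[1,c]}(\bbQ^r);\bbQ)$: the Chevalley--Eilenberg chain complex $\Lambda^*\bigl(\Lie^{[1,c]}(\bbQ^r)\bigr)$ is manifestly built from polynomial functors on~$\ab$, decomposing in chain-degree~$d$ into summands of polynomial degree at most~$cd$, with differentials that are natural transformations.
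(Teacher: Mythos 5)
Your proposal is correct, but essentially because its closing paragraph is the paper's entire proof: the paper does not run the Lyndon--Hochschild--Serre induction at all. It passes directly to the Malcev completion, invokes Nomizu and Pickel to identify $\rmH_d(\rmN_c^r;\bbQ)$ with the Lie algebra homology of the free nilpotent Lie algebra $\Lie^{[1,c]}(\bbQ^r)$, and reads off polynomiality of degree at most~$cd$ from the Chevalley--Eilenberg complex, whose term in homological degree~$d$ is $\Lambda^d\Lie^{[1,c]}(\bbQ^r)$. You are right to flag functoriality as the delicate point of your primary argument, and the obstruction deserves to be named precisely: $\rmN_c^r$ is a functor of the free group~$\rmF_r$, not of~$\bbZ^r$, since a morphism $\bbZ^r\to\bbZ^s$ in~$\ab$ lifts to $\rmF_r\to\rmF_s$ only non-canonically, and two lifts differ by an element of~$\IA_c^s$, which need not act trivially on $\rmH_*(\rmN_c^s;\bbQ)$ in positive degrees. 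Hence the extension~\eqref{eq:Nextension} and its spectral sequence are not visibly diagrams of functors on~$\ab$, and the inductive argument as stated yields only an isomorphism for each fixed~$r$ rather than the functor~$P$ itself --- and it is the functor that the later applications (the proof of Theorem~\ref{thm:Out_new}) actually use. The Nomizu--Pickel route repairs exactly this, since $\Lie^{[1,c]}(\bbQ^r)$ and its Chevalley--Eilenberg complex are manifestly functorial on~$\ab$; so your ``sidestep'' should be promoted to the proof, with the spectral sequence at most as motivation for the degree count. One further economy: you do not need semisimplicity of bounded-degree polynomial functors on~$\ab$ (which is not obvious for functors that do not factor through $-\otimes\bbQ$); the only fact required is that a subquotient of a polynomial functor of degree at most~$n$ with values in rational vector spaces is again polynomial of degree at most~$n$, which follows from exactness of cross-effects, and in the Chevalley--Eilenberg picture the homology is such a subquotient on the nose.
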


\begin{proof}
Since we are only interested in the rational homology of nilpotent groups, we may just as well consider the rationalizations/Malcev completions~\hbox{$\rmN_c^r\otimes\bbQ$}, see~\cite[Sec.~4]{Lazard}. We have natural isomorphisms~$\rmH_d(\rmN_c^r;\bbQ)\cong\rmH_d(\rmN_c^r\otimes\bbQ;\bbQ)$. Then, the category of uniquely divisible nilpotent groups is equivalent to the category of nilpotent Lie algebras over~$\bbQ$, and we get a natural isomorphism
\[
\rmH_d(\rmN_c^r;\bbQ)\cong\rmH_d(\Lie(\rmN_c^r\otimes\bbQ);\bbQ),
\]
as demonstrated by Nomizu~\cite[Thm.~1]{Nomizu} and, more generally and explicitly, by Pickel~\cite[Thm.~10]{Pickel}. As a consequence of Witt's theorem, the Lie algebra~\hbox{$\Lie(\rmN_c^r\otimes\bbQ)$} of the group of interest is nothing else than the free nilpotent Lie algebra~$\Lie^{[1,c]}(\bbQ^r)$ on~$r$ generators, and this is clearly functorial in~$\bbQ^r$ (and in particular in~$\bbZ^r$) of degree~$c$. The~$d$--th homology of a Lie algebra~$\mathfrak{g}$ is the homology of the Chevalley--Eilenberg resolution
\[
\cdots\longleftarrow\Lambda^d\mathfrak{g}\longleftarrow\cdots.
\]
As a subquotient of the functor~$\Lambda^d\Lie(\rmN_c^r\otimes\bbQ)$, which is of degree~$cd$, the homology is then also functorial of degree at most~$cd$.
\end{proof}

%%%

\subsection{Automorphisms}

More than in the free nilpotent groups~$\rmN_c^r$ themselves, we are interested in the groups~$\Aut(\rmN_c^r)$ of their automorphisms. In the case~$c=1$, these are the general linear groups~$\GL_r(\bbZ)$, and in the limiting case~$c=\infty$, these are the automorphism groups~$\Aut(\rmF_r)$ of free groups. 

\begin{proposition}
There are extensions
\begin{equation}\label{prop:aut_extension}
0
\longrightarrow\Hom(\bbZ^r,\Lie^c(\bbZ^r))
\longrightarrow\Aut(\rmN_c^r)
\longrightarrow\Aut(\rmN_{c-1}^r)
\longrightarrow1
\end{equation}
of groups.
\end{proposition}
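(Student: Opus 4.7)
The plan is to identify the subgroup $\Lie^c(\bbZ^r)\subset\rmN_c^r$ arising from~\eqref{eq:Witt2} with $\Gamma_c(\rmN_c^r)$, noting that $\Gamma_{c+1}(\rmN_c^r)=1$ by construction of $\rmN_c^r$. This subgroup is characteristic, being the last nontrivial term of the lower central series, and also central because $[\rmN_c^r,\Gamma_c(\rmN_c^r)]=\Gamma_{c+1}(\rmN_c^r)=1$. Consequently every $\phi\in\Aut(\rmN_c^r)$ preserves $\Lie^c(\bbZ^r)$ and descends to an automorphism of the quotient $\rmN_{c-1}^r$, giving the right-hand map in~\eqref{prop:aut_extension}. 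The centrality of $\Lie^c(\bbZ^r)$ will be used repeatedly in what follows.

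For surjectivity, I would fix a free basis $x_1,\dots,x_r$ of $\rmF_r$, write the same symbols for its images in $\rmN_c^r$ and $\rmN_{c-1}^r$, and given $\psi\in\Aut(\rmN_{c-1}^r)$, pick arbitrary lifts $y_i\in\rmN_c^r$ of $\psi(x_i)$. The universal property of $\rmN_c^r$ as the free nilpotent group of class $c$ on $r$ generators yields an endomorphism $\phi$ with $\phi(x_i)=y_i$, and $\phi$ covers $\psi$ by construction. To see $\phi$ is invertible, I would apply the five-lemma to the ladder of extensions derived from~\eqref{eq:Nextension}: on the quotient the induced map is $\psi$, while on the sub $\Lie^c(\bbZ^r)$ it agrees with $\Lie^c(\psi_{\mathrm{ab}})$ by naturality of Witt's isomorphism~\eqref{eq:Witt2}, and both are automorphisms because $\psi_{\mathrm{ab}}$ is.

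For the kernel, any $\phi$ inducing the identity on $\rmN_{c-1}^r$ takes the form $\phi(x)=x\cdot\delta(x)$ for a function $\delta\colon\rmN_c^r\to\Lie^c(\bbZ^r)$; the centrality of $\Lie^c(\bbZ^r)$ turns the multiplicativity of $\phi$ into $\delta(xy)=\delta(x)\delta(y)$, and since the target is abelian this homomorphism factors uniquely through the abelianization $\bbZ^r$, producing an element of $\Hom(\bbZ^r,\Lie^c(\bbZ^r))$. Conversely, every such $\delta$ determines an automorphism by $x\mapsto x\cdot\delta(\bar x)$, and the two assignments are mutually inverse; compatibility with composition is automatic once one notes, again by functoriality of $\Lie^c$, that any element of the kernel acts trivially on $\Lie^c(\bbZ^r)$ itself. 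The main point requiring care is the surjectivity, which rests on the universal property of $\rmN_c^r$ together with naturality of~\eqref{eq:Witt2}; the kernel identification is then routine given centrality.
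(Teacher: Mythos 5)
Your proof is correct. The paper itself offers no argument for this proposition---it simply cites \cite[Prop.~3.1]{Szymik}---so there is no in-text proof to compare against; your argument is the standard one and fills in exactly what the citation delegates. All three steps check out: the subgroup $\Gamma_c(\rmN_c^r)\cong\Lie^c(\bbZ^r)$ is central and, being a term of the lower central series, is in fact \emph{fully invariant} (preserved by all endomorphisms, not merely automorphisms)---a point slightly stronger than the ``characteristic'' you invoke, and the one your five-lemma ladder actually needs, since at that stage $\phi$ is only known to be an endomorphism. The identification of the induced map on $\Gamma_c$ with $\Lie^c(\psi_{\mathrm{ab}})$ is justified because the associated graded Lie ring of $\rmN_c^r$ is generated in degree one, and the kernel computation via $\phi(x)=x\cdot\delta(x)$ with $\delta$ central is routine as you say. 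One small remark: the statement, and your argument, implicitly require $c\geqslant 2$ (for $c=1$ the sequence as written is false); your kernel step uses $\Gamma_c(\rmN_c^r)\subseteq[\rmN_c^r,\rmN_c^r]$ both when factoring $\delta$ through the abelianization and when checking that $x\mapsto x\cdot\delta(\bar x)$ is inverted by $-\delta$.
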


See again~\cite[Prop.~3.1]{Szymik}, for instance. Note that any element of the quotient group~$\Aut(\rmN_{c-1}^r)$ acts on the kernel group via restriction along the canonical projection~$\Aut(\rmN_{c-1}^r)\to\Aut(\rmN_1^r)=\GL_r(\bbZ)$. For that reason, it is sometimes more efficient to study the extensions
\begin{equation}\label{eq:def_IA}
1
\longrightarrow\IA_c^r
\longrightarrow\Aut(\rmN_c^r)
\longrightarrow\GL_r(\bbZ)
\longrightarrow1
\end{equation}
of groups. The kernels~$\IA_c^r$ are known to be nilpotent of class~$c-1$. See for instance Andreadakis~\cite[Cor.~1.3.]{Andreadakis}. By what has already been said, it is clear that they differ by free abelian groups in the sense that there are extensions
\begin{equation}\label{eq:aut_extension_rel}
0
\longrightarrow\Hom(\bbZ^r,\Lie^c(\bbZ^r))
\longrightarrow\IA_c^r
\longrightarrow\IA_{c-1}^r
\longrightarrow1.
\end{equation}

%%%

\subsection{On the Andreadakis--Johnson module}

We can now put this together to get the following result that will later help us to estimate homology.

\begin{proposition}\label{prop:direct_summand}
The~$\GL_r(\bbZ)$--module~$\rmH_q(\IA_c^r)\otimes\bbQ$ is isomorphic to a direct summand of the~$\GL_r(\bbZ)$--module~\hbox{$\Lambda^q(\Hom(\bbZ^r,\Lie^{[2,c]}(\bbZ^r))\otimes\bbQ$}.
\end{proposition}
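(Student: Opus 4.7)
I would induct on the class~$c$, using the extensions~\eqref{eq:aut_extension_rel} as the structural input and the Lyndon--Hochschild--Serre spectral sequence for the homological bookkeeping. The base case~$c=1$ is trivial ($\IA_1^r$ is the trivial group and~$\Lie^{[2,1]}=0$), and for~$c=2$ the extension~\eqref{eq:aut_extension_rel} degenerates to an isomorphism~$\IA_2^r\cong\Hom(\bbZ^r,\Lie^2(\bbZ^r))$, so the rational homology is the full exterior algebra on this free abelian kernel, matching~$\Lambda^q\Hom(\bbZ^r,\Lie^{[2,2]}(\bbZ^r))\otimes\bbQ$ on the nose.

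For the inductive step, I would feed~\eqref{eq:aut_extension_rel} into the Lyndon--Hochschild--Serre spectral sequence. The explanation given after~\eqref{prop:aut_extension} shows that~$\Aut(\rmN_{c-1}^r)$ acts on the kernel~$\Hom(\bbZ^r,\Lie^c(\bbZ^r))$ only through its projection to~$\GL_r(\bbZ)$, so the subgroup~$\IA_{c-1}^r$ acts trivially on it. Since the kernel is free abelian, the~$\rmE^2$-page collapses to
\[
\rmE^2_{p,q}=\rmH_p(\IA_{c-1}^r;\bbQ)\otimes\Lambda^q\Hom(\bbZ^r,\Lie^c(\bbZ^r))\otimes\bbQ,
\]
converging to~$\rmH_{p+q}(\IA_c^r;\bbQ)$, and it is equivariant for the induced action of~$\GL_r(\bbZ)=\Aut(\rmN_c^r)/\IA_c^r$.

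By the inductive hypothesis, $\rmH_p(\IA_{c-1}^r;\bbQ)$ is a~$\GL_r(\bbZ)$-equivariant direct summand of~$\Lambda^p\Hom(\bbZ^r,\Lie^{[2,c-1]}(\bbZ^r))\otimes\bbQ$. Tensoring and summing over the diagonal~$p+q=d$ identifies the total~$\rmE^2$-diagonal with a~$\GL_r(\bbZ)$-summand of
\[
\bigoplus_{p+q=d}\Lambda^p\Hom(\bbZ^r,\Lie^{[2,c-1]}(\bbZ^r))\otimes\Lambda^q\Hom(\bbZ^r,\Lie^c(\bbZ^r))\otimes\bbQ\cong\Lambda^d\Hom(\bbZ^r,\Lie^{[2,c]}(\bbZ^r))\otimes\bbQ.
\]
The abutment~$\rmH_d(\IA_c^r;\bbQ)$ is then a subquotient of this same module through a $\GL_r(\bbZ)$-equivariant filtration.

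The delicate step is upgrading ``subquotient'' to ``direct summand''. For this I would observe that all modules appearing are, as functors of~$\bbZ^r$, polynomial functors from~$\ab$ to rational vector spaces, and therefore the~$\GL_r(\bbZ)$-actions extend canonically to algebraic actions of the reductive group~$\GL_r(\bbQ)$. Finite-dimensional algebraic representations of~$\GL_r(\bbQ)$ are semisimple, so every subquotient splits off~$\GL_r(\bbQ)$-equivariantly, and restricting to~$\GL_r(\bbZ)\subset\GL_r(\bbQ)$ preserves the splitting. The main obstacle is precisely this semisimplicity input and the verification that both sides genuinely live in the category of polynomial~$\GL_r$-representations---once this is secured, the induction closes automatically.
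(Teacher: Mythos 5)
Your argument is essentially identical to the paper's: both induct on~$c$ via the Lyndon--Hochschild--Serre spectral sequence for the extensions~\eqref{eq:aut_extension_rel}, use the trivial action of~$\IA_{c-1}^r$ on the free abelian kernel to write the~$\rmE^2$ page as~$\rmH_*(\IA_{c-1}^r)\otimes\Lambda^*\Hom(\bbZ^r,\Lie^c(\bbZ^r))$, and invoke rational semisimplicity of~$\GL_r$ to promote subquotients to direct summands. Your expansion of the last point (extending to algebraic representations of the reductive group~$\GL_r(\bbQ)$ via polynomiality) is just a more explicit version of the paper's one-line appeal to reductivity, so this is the same proof.
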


\begin{proof}
Our starting point are the Lyndon--Hochschild--Serre spectral sequences associated with the extensions~\eqref{eq:aut_extension_rel}. We have
\[
\rmE^2_{s,t}\Longrightarrow\rmH_{s+t}(\IA_c^r)
\]
with
\begin{equation}\label{eq:E2}
\rmE^2_{s,t}=\rmH_s(\IA_{c-1}^r;\rmH_t\Hom(\bbZ^r,\Lie^c(\bbZ^r))).
\end{equation}

Since we are working rationally, and~$\GL_r$ is reductive, every short exact sequence splits. It therefore suffices to show that the filtration quotients~$\rmE^\infty_{s,q-s}$ of the homology~$\rmH_q(\IA_c^r)$ form a direct summand of~\hbox{$\Lambda^q(\Hom(\bbZ^r,\Lie^{[2,c]}(\bbZ^r))$}. In turn, this will follow once we know the same for the terms~$\rmE^2_{s,q-s}$ on the~$\rmE^2$ page, because the groups~$\rmE^\infty_{s,q-s}$ are subquotients (and rationally direct summands) thereof.

As for the groups~\eqref{eq:E2} on the~$\rmE^2$ page, it is evident by construction that the action of the kernels~$\IA_{c-1}^r$ of the canonical projections~\hbox{$\Aut(\rmN_c^r)\to\GL_r(\bbZ)$} on the coefficient groups~\hbox{$\Hom(\bbZ^r,\Lie^c(\bbZ^r))$} are trivial. Therefore, we can write the~$\rmE^2$ term as a graded tensor product
\[
\rmE^2_{*,*}\cong\rmH_*(\IA_{c-1}^r)\otimes\Lambda^*\Hom(\bbZ^r,\Lie^c(\bbZ^r)),
\]
using that the homology of the free abelian groups is given by the exterior powers. We see that~$\rmH_q(\IA_c^r)$ is a direct summand of the degree~$q$ part of
\[
\rmH_*(\IA_{c-1}^r)\otimes\Lambda^*\Hom(\bbZ^r,\Lie^c(\bbZ^r)).
\]
By induction on~$c$, this is isomorphic to a direct summand of the degree~$q$ part of
\begin{align*}
&\Lambda^*\Hom(\bbZ^r,\Lie^2(\bbZ^r))
\otimes\cdots\otimes
\Lambda^*\Hom(\bbZ^r,\Lie^c(\bbZ^r))\\
\cong&
\Lambda^*\Hom(\bbZ^r,\Lie^2(\bbZ^r)
\oplus\cdots\oplus\Lie^c(\bbZ^r)),
\end{align*}
which is~$\Lambda^q\Hom(\bbZ^r,\Lie^{[2,c]}(\bbZ^r))$, as claimed.
\end{proof}

Tensoring with another rational representation~$V(\bbZ^r)$ that originates in a functor~$V$ from the category of finitely generated free abelian groups to the category of rational vector spaces, we obtain:

\begin{corollary}\label{cor:direct_summand}
For every functor~$V$ from the category~$\ab$ of the finitely generated free abelian groups to the category of rational vector spaces, the~$\GL_r(\bbZ)$--module~$\rmH_q(\IA_c^r)\otimes V(\bbZ^r)$ is a direct summand (up to isomorphism) of the~$\GL_r(\bbZ)$--module~\hbox{$\Lambda^q(\Hom(\bbZ^r,\Lie^{[2,c]}(\bbZ^r))\otimes V(\bbZ^r)$}.
\end{corollary}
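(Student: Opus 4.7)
The plan is to deduce this corollary directly from Proposition \ref{prop:direct_summand} by tensoring with $V(\bbZ^r)$. The key observation is that the conclusion of Proposition \ref{prop:direct_summand} is an isomorphism of $\GL_r(\bbZ)$-modules exhibiting $\rmH_q(\IA_c^r)\otimes\bbQ$ as a retract of $\Lambda^q(\Hom(\bbZ^r,\Lie^{[2,c]}(\bbZ^r)))\otimes\bbQ$, i.e., there exist $\GL_r(\bbZ)$-equivariant maps $i$ and $p$ between these two modules with $p\circ i=\id$.

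First I would recall this splitting from Proposition \ref{prop:direct_summand}. Then I would tensor the maps $i$ and $p$ with the identity on $V(\bbZ^r)$ over $\bbQ$. Since $V(\bbZ^r)$ is already a rational vector space, the resulting maps
\[
i\otimes\id_{V(\bbZ^r)}\colon\rmH_q(\IA_c^r)\otimes V(\bbZ^r)\longrightarrow\Lambda^q(\Hom(\bbZ^r,\Lie^{[2,c]}(\bbZ^r)))\otimes V(\bbZ^r)
\]
and
\[
p\otimes\id_{V(\bbZ^r)}\colon\Lambda^q(\Hom(\bbZ^r,\Lie^{[2,c]}(\bbZ^r)))\otimes V(\bbZ^r)\longrightarrow\rmH_q(\IA_c^r)\otimes V(\bbZ^r)
\]
still satisfy $(p\otimes\id)\circ(i\otimes\id)=\id$. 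Both maps are $\GL_r(\bbZ)$-equivariant because the group acts diagonally on tensor products, $i$ and $p$ are equivariant, and the action of $\GL_r(\bbZ)$ on $V(\bbZ^r)$ via the functoriality of $V$ commutes with the identity map on $V(\bbZ^r)$.

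There is really no obstacle here: the only conceptual content is that retracts are preserved by the additive functor $-\otimes_\bbQ V(\bbZ^r)$ on the category of rational $\GL_r(\bbZ)$-representations. The statement therefore follows in a couple of lines.
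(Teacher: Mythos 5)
Your proposal is correct and is essentially the paper's own argument: the paper deduces the corollary from Proposition~\ref{prop:direct_summand} simply by tensoring with the rational representation~$V(\bbZ^r)$, which is exactly the retract-preservation observation you spell out. Nothing further is needed.
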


%%%

\section{A proof of Theorem~\ref{thm:Nil_c}}\label{sec:proofs}

We now have enough ingredients to prove Theorem~\ref{thm:Nil_c}.

\begin{proof}
The homomorphism
\begin{equation}\label{eq:edge_c}
\rmH_*(\Aut(\rmN_c^\infty);V_\infty)\longrightarrow\rmH_*(\GL_\infty(\bbZ);V_\infty)
\end{equation}
in question is the edge homomorphism in the Lyndon--Hochschild--Serre spectral sequence
\[
\rmE^2_{p,q}\Longrightarrow\rmH_{p+q}(\Aut(\rmN_c^\infty);V_\infty)
\]
for the colimit of the extensions~\eqref{eq:def_IA}, with~$\rmE^2$ page
\begin{equation}\label{eq:first_ss_2}
\rmE_{p,q}^2=\rmH_p(\GL_\infty(\bbZ);\rmH_q(\IA_c^\infty;V_\infty)).
\end{equation}
Since~$\IA_c^\infty$ maps trivially to the general linear groups, we can pull the coefficients out:
\[
\rmE_{p,q}^2\cong\rmH_p(\GL_\infty(\bbZ);\rmH_q(\IA_c^\infty)\otimes V_\infty).
\]
In fact, the~$0$--line is just the homology of the stable general linear group with coefficients in~$V_\infty$. We will show that the other groups (i.e.~the lines with~$q\geqslant1$) on the~$\rmE^2$ page vanish rationally: 
\begin{equation}\label{eq:claim}
\rmH_*(\GL_\infty(\bbZ);\rmH_q(\IA_c^\infty)\otimes V_\infty)=0
\end{equation}
for all~$c\geqslant2$ and~$q\geqslant1$. This will imply the result.

We will prove~\eqref{eq:claim} using some very crude estimates. (Here these always originate from spectral sequences.) The first one is Corollary~\ref{cor:direct_summand}, which implies that it suffices to be shown that 
\begin{equation}
\rmH_*(\GL_\infty(\bbZ);\Lambda^q(\rmI^\vee\boxtimes\Lie^{[2,c]})_\infty\otimes V_\infty)=0
\end{equation}
for all~$c\geqslant2$ and~$q\geqslant1$. As explained in Section~\ref{sec:stable_K-theory}, such twisted homology can be computed from the stable~K-theory~$\rmK_*^\st(\bbZ;\Lambda^q(\rmI^\vee\boxtimes\Lie^{[2,c]})\otimes V)$ using the spectral sequence~\eqref{eq:ss_Waldhausen_1} and~\eqref{eq:ss_Waldhausen_2}
for the bifunctor~$D=\Lambda^q(\rmI^\vee\boxtimes\Lie^{[2,c]})\otimes V$:
\[
\rmE^2_{s,t}\Longrightarrow\rmH_{s+t}(\GL_\infty(\bbZ);\Lambda^q(\rmI^\vee\boxtimes\Lie^{[2,c]})_\infty\otimes V_\infty)
\]
with
\[
\rmE^2_{s,t}=\rmH_s(\GL_\infty(\bbZ);\rmK_t^\st(\bbZ;\Lambda^q(\rmI^\vee\boxtimes\Lie^{[2,c]})\otimes V)).
\]
The bifunctor~$\Lambda^q(\rmI^\vee\boxtimes\Lie^{[2,c]})\otimes V$ at hand is of finite degree in each variable, so that we can use Scorichenko's Theorem~\ref{thm:Scorichenko} to the effect that stable K-theory is bifunctor homology,
\[
\rmK_*^\st(\bbZ;\Lambda^q(\rmI^\vee\boxtimes\Lie^{[2,c]})\otimes V)
\cong\HH_*(\ab;\Lambda^q(\rmI^\vee\boxtimes\Lie^{[2,c]})\otimes V).
\]
Thus, it suffices to see that the right hand side is zero rationally. Since the functor~$\Lambda^q$ is a retract of~$\otimes^q$, this follows from Proposition~\ref{prop:rational_vanishing_c}.
\end{proof}

\begin{remark}\label{rem:vanish1}
By Betley's work~\cite[Thm.4.2]{Betley} on the homology of the general linear groups with twisted coefficients, the right hand side of~\eqref{eq:edge_c} (and therefore also the left hand side) vanishes if the functor~$V$ is reduced, that is if we have~$V(0)=0$.
\end{remark} 

%%%

\section{Outer automorphism groups}\label{sec:out}

The outer automorphism group~$\Out(G)$ of a group~$G$ is the quotient in an extension
\[
1\longrightarrow \Inn(G)\longrightarrow\Aut(G)\longrightarrow\Out(G)\longrightarrow1,
\]
with inner automorphism group~$\Inn(G)\cong G/\rmZ(G)$ and the projection gives a factorization~\hbox{$\Aut(G)\to\Out(G)\to\Aut(\rmH_*G)$} of the canonical homomorphism to the automorphism group of the homo\-logy/abelianization: Inner automorphisms act trivially on homology. 

%\subsection{More group extensions}

In the case when~$G=\rmN_c^r$ is a free nilpotent group, the center is the kernel of the canonical projection~$\rmN_c^r\to\rmN_{c-1}^r$, so that we have
\[
\rmZ(\rmN_c^r)\cong\Lie^c(\bbZ^r)
\]
and
\[
\Inn(\rmN_c^r)\cong\rmN_{c-1}^r.
\]
Consequently, we get an extension
\begin{equation}\label{eq:aut_out_extension}
1\longrightarrow\rmN_{c-1}^r\longrightarrow\Aut(\rmN_c^r)\longrightarrow\Out(\rmN_c^r)\longrightarrow1.
\end{equation}
In parallel with~\eqref{eq:def_IA}, we define subgroups~$\IO_c^r\leqslant\Out(\rmN_c^r)$ as the kernels in the extensions
\begin{equation}\label{eq:def_IO}
1
\longrightarrow\IO_c^r
\longrightarrow\Out(\rmN_c^r)
\longrightarrow\GL_r(\bbZ)
\longrightarrow1.
\end{equation}
This gives extensions
\begin{equation}\label{eq:main_extension}
1\longrightarrow\rmN_{c-1}^r\longrightarrow\IA_c^r\longrightarrow\IO_c^r\longrightarrow1
\end{equation}
of nilpotent groups of class~$c-1$. 

This finishes our description of the outer automorphism groups~$\Out(\rmN_c^r)$ up to extensions. We can proceed to compute their rational homology.

%%%

%\subsection{Rational homology}

Let~$V$ be a polynomial functor (of finite degree) from the category~$\ab$ of finitely generated free abelian groups to the category of rational vector spaces. This defines twisted coefficients for both of the groups~$\Aut(\rmN_c^\infty)$ and~$\Out(\rmN_c^\infty)$ via their homomorphisms to the general linear groups.

\begin{theorem}\label{thm:Out_new}
For every class~$c\geqslant1$ the canonical projection induces an isomorphism
\begin{equation}\label{eq:aut->out}
\rmH_d(\Aut(\rmN^r_c);V(\bbZ^r))\longrightarrow\rmH_d(\Out(\rmN^r_c);V(\bbZ^r))
\end{equation}
in the stable range of~$\Aut(\rmN^r_c)$ for all finite degree functors~$V$ on~$\ab$ with values in rational vector spaces.
\end{theorem}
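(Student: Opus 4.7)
The plan is to analyse the Lyndon--Hochschild--Serre spectral sequence of the extension~\eqref{eq:aut_out_extension}. The case $c=1$ is trivial, since then $\rmN_{c-1}^r=1$ and the projection $\Aut(\rmN_c^r)\to\Out(\rmN_c^r)$ is itself an isomorphism, so I assume $c\geqslant 2$ in what follows. Inner automorphisms act trivially on abelianisations, hence $\rmN_{c-1}^r$ acts trivially on the pulled-back coefficients $V(\bbZ^r)$, and the $\rmE^2$--page reads
\[
\rmE^2_{p,q}=\rmH_p(\Out(\rmN_c^r);\rmH_q(\rmN_{c-1}^r;\bbQ)\otimes V(\bbZ^r))\Longrightarrow\rmH_{p+q}(\Aut(\rmN_c^r);V(\bbZ^r)),
\]
with the map~\eqref{eq:aut->out} appearing as the edge homomorphism into the bottom row. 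By Proposition~\ref{prop:homology_is_polynomial} one has $\rmH_q(\rmN_{c-1}^r;\bbQ)=P_q(\bbZ^r)$ for a polynomial functor $P_q\in\calF(\ab)$; moreover, for $q\geqslant 1$ the identity $P_q(0)=\rmH_q(1;\bbQ)=0$ shows that $P_q$, and therefore also $P_q\otimes V$, is \emph{reduced}.

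I would then run a simultaneous induction on the total degree $d$, proving in the stable range of $\Aut(\rmN_c^r)$ both $(\mathrm{A}_d)$, the isomorphism statement~\eqref{eq:aut->out} in that degree, and $(\mathrm{B}_d)$, the vanishing $\rmH_d(\Out(\rmN_c^r);W(\bbZ^r))=0$ for every reduced polynomial functor $W\in\calF(\ab)$ valued in rational vector spaces. The base case $(\mathrm{A}_0)$ holds tautologically, as both sides compute $V(\bbZ^r)_{\GL_r(\bbZ)}$; the base case $(\mathrm{B}_0)$ then follows from $(\mathrm{A}_0)$ together with the stable vanishing of $\rmH_0(\Aut(\rmN_c^\infty);W_\infty)$ supplied by Theorem~\ref{thm:Nil_c} and Remark~\ref{rem:vanish1}, propagated to finite $r$ by homological stability for $\Aut(\rmN_c^r)$. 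For the inductive step, every $\rmE^2_{p,q}$ in total degree at most $d$ with $q\geqslant 1$ has $p\leqslant d-1$ and reduced polynomial coefficients $P_q\otimes V$, hence vanishes by $(\mathrm{B}_p)$. A standard inspection of the spectral sequence then shows $\rmE^\infty_{d,0}=\rmE^2_{d,0}$ and identifies the edge homomorphism with~\eqref{eq:aut->out}, proving $(\mathrm{A}_d)$; the statement $(\mathrm{B}_d)$ is then obtained from $(\mathrm{A}_d)$ by repeating the base case argument in degree $d$.

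The step I expect to demand the most care is the bookkeeping of stable ranges through the induction, since at the $d$--th stage one invokes $(\mathrm{B}_p)$ for the coefficient functor $P_q\otimes V$, whose polynomial degree depends on $q\leqslant d$, on $c$, and on $\deg V$. It has to be verified that the stable ranges of $\Aut(\rmN_c^r)$ and, via earlier steps of the induction, of $\Out(\rmN_c^r)$ with these higher-degree polynomial coefficients comfortably contain the range in which $(\mathrm{A}_d)$ is claimed. In line with the remarks in the introduction, however, only the existence of degree-dependent stability bounds is required, and no explicit estimate on them enters the argument.
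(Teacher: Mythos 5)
Your proposal is correct and follows essentially the same route as the paper: the same Lyndon--Hochschild--Serre spectral sequence for the extension~\eqref{eq:aut_out_extension}, the same identification of the higher rows as stable homology of~$\Out(\rmN^r_c)$ with reduced polynomial coefficients, and the same induction on homological degree, with your auxiliary statement~$(\mathrm{B}_d)$ being exactly the vanishing the paper extracts from Theorem~\ref{thm:Nil_c} via Remarks~\ref{rem:vanish1} and~\ref{rem:vanish2}. Your explicit attention to the stable-range bookkeeping is a sound addition but does not change the argument.
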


\begin{remark}
The existence of a stable range for the homology of the automorphism groups (with polynomial coefficients) has been established in~\cite[Thm.~4.5]{Szymik}, even integrally. The proof of Theorem~\ref{thm:Out_new} given below uses that stability result. It says that there exists an integer~\hbox{$R=R(c,d,V)$} such that the homology~$\rmH_d(\Aut(\rmN^r_c);V(\bbZ^r))$ is independent (in a precise sense) of~$r$ as soon as~$r\geqslant R$. At the time of writing, the best possible value is not known. 
\end{remark} 

\begin{remark}\label{rem:vanish2}
The theorem implies the existence of a stable range for the homology of the {\it outer} automorphism groups as well, at least rationally. We note that, in particular, the rational homology vanishes in the stable range if the functor~$V$ is reduced, by Theorem~\ref{thm:Nil_c} and Remark~\ref{rem:vanish1} following it.
\end{remark} 

\begin{proof}[Proof of Theorem~\ref{thm:Out_new}]
We prove this by induction on the homological degree~$d$.

We start the induction in degree~$d=0$. This case concerns the co-invariants of the actions on~$V(\bbZ^r)$. Since the groups~$\Aut(\rmN^r_c)$ surject onto the groups~$\Out(\rmN^r_c)$, the co-invariants are the same.

Let us now assume that the degree~$d$ is positive, and that the result has already been proven for all smaller degrees than that. We consider the Lyndon--Hochschild--Serre spectral sequence 
\[
\rmE_{s,t}^2(r)=\rmH_s(\Out(\rmN^r_c);\rmH_t(\rmN^r_{c-1};V(\bbZ^r)))\Longrightarrow\rmH_{s+t}(\Aut(\rmN^r_c);V(\bbZ^r))
\]
for the extension~\eqref{eq:aut_out_extension} in the stable range, that is for~$r$ so large that the homology groups under consideration represent the stable values. (There are only finitely many of them relevant at each given time.) We would like to show that the rows with~$t\not=0$ vanish, so that the edge homomorphism--which is the homomorphism~\eqref{eq:aut->out} in question--is an isomorphism.

Since the action on the coefficients~$V(\bbZ^r)$ factors through the quotient~$\Out(\rmN^r_c)$, the coefficients for the homology~$\rmH_t(\rmN^r_{c-1};V(\bbZ^r))$ are actually untwisted. 
\[
\rmH_t(\rmN^r_{c-1};V(\bbZ^r))\cong\rmH_t(\rmN^r_{c-1})\otimes V(\bbZ^r)
\]
From Proposition~\ref{prop:homology_is_polynomial} we deduce that this
is a polynomial functor (of finite degree). And if~$t\not=0$, then this functor is reduced: Since the group~$\rmN^0_{c-1}$ is trivial, so is the homology~$\rmH_t(\rmN^0_{c-1})$ for~$t\not=0$. It follows by induction (and Remark~\ref{rem:vanish2}) that the groups~$\rmE_{s,t}^2(r)$ vanish for~$t\not=0$, and we are left with an isomorphism
\[
\rmH_d(\Aut(\rmN_c^r);V(\bbZ^r))\cong\rmE_{d,0}^2(r)=\rmH_d(\Out(\rmN_c^r);V(\bbZ^r)),
\]
as desired.
\end{proof}

\begin{remark}
The basic set-up for this proof is the same as in~\cite[Sec.~1]{Szymik}. There it was used to prove homological stability, whereas here it is used to compute stable homology.
\end{remark}

Combining the specializations of both Theorems~\ref{thm:Nil_c} and~\ref{thm:Out_new} to the case of constant coefficients we get:

\begin{corollary}\label{cor:Out_New}
The canonical projection induces an isomorphism
\[
\rmH_*(\Out(\rmN_c^\infty);\bbQ)\longrightarrow\rmH_*(\GL_\infty(\bbZ);\bbQ)
\]
in rational homology for all classes~$c\geqslant1$.
\end{corollary}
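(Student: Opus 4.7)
The plan is to combine the two isomorphisms supplied by Theorem~\ref{thm:Nil_c} and Theorem~\ref{thm:Out_new}, each applied to the constant functor $V$ on $\ab$ with value $\bbQ$. This functor is polynomial of degree~$0$, in particular of finite degree, and takes values in rational vector spaces, so the hypotheses of both theorems are satisfied.

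First, I would specialise Theorem~\ref{thm:Out_new} to $V=\bbQ$. For each $r$ in the stable range of $\Aut(\rmN^r_c)$ it yields an isomorphism $\rmH_*(\Aut(\rmN^r_c);\bbQ)\to\rmH_*(\Out(\rmN^r_c);\bbQ)$ induced by the canonical projection. Passing to the colimit $r\to\infty$, which is legitimate thanks to the homological stability for $\Aut(\rmN^r_c)$ recalled in the introduction, produces an isomorphism
\[
\rmH_*(\Aut(\rmN_c^\infty);\bbQ)\longrightarrow\rmH_*(\Out(\rmN_c^\infty);\bbQ).
\]
Next, specialising Theorem~\ref{thm:Nil_c} to the same $V=\bbQ$ provides an isomorphism
\[
\rmH_*(\Aut(\rmN_c^\infty);\bbQ)\longrightarrow\rmH_*(\GL_\infty(\bbZ);\bbQ).
\]

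To conclude, I would use that the canonical homomorphism $\Aut(\rmN^r_c)\to\GL_r(\bbZ)$ from extension~\eqref{eq:def_IA} factors as $\Aut(\rmN^r_c)\to\Out(\rmN^r_c)\to\GL_r(\bbZ)$ via extension~\eqref{eq:def_IO}, since inner automorphisms act trivially on the abelianisation. In the induced commutative triangle on rational stable homology, two of the three edges are now known to be isomorphisms, so two-out-of-three forces the third, which is precisely the map in question, to be an isomorphism as well. There is no real obstacle here: the argument is a formal consequence of the two preceding theorems together with the commutativity of that triangle, which is immediate from the definitions.
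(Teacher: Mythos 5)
Your proposal is correct and matches the paper's argument, which likewise obtains the corollary by specialising Theorem~\ref{thm:Nil_c} and Theorem~\ref{thm:Out_new} to constant rational coefficients and combining them; you have merely made explicit the passage to the colimit via homological stability and the commutativity of the triangle $\Aut(\rmN^r_c)\to\Out(\rmN^r_c)\to\GL_r(\bbZ)$, both of which the paper leaves implicit.
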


%%%

\section{Mapping class groups}\label{sec:mcg}

The free nilpotent groups~$\rmN^r_c$ of class~$c$ on~$r$ generators arise as fundamental groups of aspherical nil-manifolds: Upon passage to classifying spaces, the extensions~\eqref{eq:Nextension} gives rise to iterated torus bundles over tori. These manifolds will be denoted by~$\rmX^r_c$ here. We can now spell out some implications of our preceding results for the mapping class groups of the~$\rmX^r_c$ for the various categories of automorphisms. Note that the dimension~$\dim(\rmX^r_c)$ of the manifolds~$\rmX^r_c$ increases with~$r$, so that for `large' enough~$r$, which is the situation that we are interested in here, surgery and concordance/pseudo-isotopy theory apply and will compute the mapping class groups in the~$\TOP$,~$\PL$, and~$\DIFF$ categories. For basic information about the automorphism groups of manifolds in these categories, we refer to the survey articles by Burghelea~\cite{Burghelea}, Hatcher~\cite{Hatcher}, Balcerak--Hajduk~\cite{Balcerak+Hajduk}, and Weiss--Williams~\cite{Weiss+Williams}.

We begin with the homotopy category. For any discrete group~$G$, the group~$\Out(G)$ is isomorphic to the group of components of the topological monoid~$\rmG(X)$ of homotopy self-equivalences of the classifying space~$X$ of~$G$. For the groups~$G=\rmN^r_c$ with classifying spaces~$\rmX^r_c$, we therefore have an isomorphism
\begin{equation}\label{eq:G=Out}
\pi_0\rmG(\rmX^r_c)\cong\Out(\rmN^r_c)
\end{equation}
of groups, and Corollary~\ref{cor:Out_New} directly gives the stable rational homology of the symmetries of the manifolds~$\rmX^r_c$ in the homotopy category. 

We now explain how to transfer this to the~$\TOP$,~$\PL$, and~$\DIFF$ categories:

\begin{theorem}\label{thm:mcgs}
For all integers~$c\geqslant1$, if the nil-manifold~$\rmX^r_c$ is of dimension at least~$5$, the canonical homomorphisms
\[
\pi_0\DIFF(\rmX^r_c)\longrightarrow
\pi_0\PL(\rmX^r_c)\longrightarrow
\pi_0\TOP(\rmX^r_c)\longrightarrow
\pi_0\rmG(\rmX^r_c)
\]
of discrete groups induce isomorphisms in rational homology of discrete groups.
\end{theorem}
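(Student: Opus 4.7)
The plan is to treat the three maps separately, with the common theme that each comparison is governed by a fibration between the relevant spaces of self-equivalences whose fiber has only finite homotopy in the ranges of interest; passing to a short exact sequence of discrete groups with finite kernel and finite cokernel then gives a rational homology isomorphism of classifying spaces via the Lyndon--Hochschild--Serre spectral sequence, since finite groups are rationally acyclic.

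For the rightmost map $\pi_0\TOP(\rmX^r_c)\to\pi_0\rmG(\rmX^r_c)\cong\Out(\rmN^r_c)$, the argument invokes topological rigidity for infra-nilmanifolds. Since the group $\rmN^r_c$ is torsion-free, finitely generated, and nilpotent, the Farrell--Jones isomorphism conjecture holds for it, forcing the Whitehead groups $\mathrm{Wh}_i(\rmN^r_c)$ and the topological structure set of $\rmX^r_c$ to vanish (Farrell--Hsiang in the infra-nilmanifold case; Bartels--L\"uck in greater generality). Together with the vanishing of $\pi_0$ of the relevant pseudo-isotopy spaces (Hatcher, Igusa), this identifies $\pi_0\TOP(\rmX^r_c)$ with $\Out(\rmN^r_c)$ outright, as an isomorphism of discrete groups, which a fortiori is a rational homology isomorphism.

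For the maps $\pi_0\DIFF(\rmX^r_c)\to\pi_0\PL(\rmX^r_c)\to\pi_0\TOP(\rmX^r_c)$, I would appeal to smoothing and triangulation theory as presented in the surveys cited just before the statement. By Kirby--Siebenmann, the fiber of $\mathrm{B}\PL\to\mathrm{B}\TOP$ is an Eilenberg--Mac Lane space of type $K(\bbZ/2,3)$; by Kervaire--Milnor together with Cerf, the homotopy groups of $\PL/\DIFF$ are the finite groups $\Theta_n$ of exotic spheres. Smoothing/triangulation theory (together with the comparison between block and honest automorphisms, controlled by stable concordance spaces whose rational homotopy vanishes for $\rmN^r_c$ by the same Farrell--Jones input used above) expresses the kernel and cokernel of each of these two $\pi_0$-level comparison maps in terms of cohomology groups of the finite-dimensional manifold $\rmX^r_c$ with values in these finite coefficient groups. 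All such contributions are finite abelian, so the preliminary reduction applies.

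The main obstacle is the first step: the rigidity identification of $\pi_0\TOP(\rmX^r_c)$ with $\Out(\rmN^r_c)$ requires the full strength of the Farrell--Jones conjecture for $\rmN^r_c$, both for the structure set (so that every self-homotopy equivalence is realized by a homeomorphism) and for pseudo-isotopy (so that this realization is unique up to isotopy). In comparison, the smoothing-theoretic comparisons contribute only $2$-torsion coming from Kirby--Siebenmann and finite exotic-sphere torsion, both rationally invisible, so they are the conceptually softer part of the argument.
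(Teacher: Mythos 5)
Your overall architecture coincides with the paper's: realize every homotopy class of self-equivalences by a homeomorphism (in fact by a smooth affine automorphism of the nil-manifold), control the block-versus-honest discrepancy by pseudo-isotopy theory, and dispose of the $\PL$ and $\DIFF$ comparisons via $\TOP/\PL\simeq K(\bbZ/2,3)$ and the finiteness of the homotopy of $\PL/\rmO$. Where you invoke Farrell--Jones and Bartels--L\"uck, the paper quotes Wall's surgery computations for poly-$\bbZ$ groups and Waldhausen's vanishing of the Whitehead spaces for that class; for torsion-free finitely generated nilpotent groups these inputs are interchangeable, so this is a cosmetic difference.

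The one genuine problem is your claim that rigidity identifies $\pi_0\TOP(\rmX^r_c)$ with $\Out(\rmN^r_c)$ \emph{outright, as an isomorphism of discrete groups}. Vanishing of the structure set and of the groups $\mathrm{Wh}_i(\rmN^r_c)$ gives surjectivity but not injectivity of $\pi_0\TOP(\rmX^r_c)\to\pi_0\rmG(\rmX^r_c)$: the kernel is $\pi_1(\widetilde\TOP(\rmX^r_c)/\TOP(\rmX^r_c))$, which by Hatcher--Wagoner and Igusa is dominated by $\pi_0$ of the concordance space, and besides the $\mathrm{Wh}_2$-summand (which does vanish here) this contains a quotient of $\bbF_2[\rmN^r_c]/\bbF_2$ coming from the non-K-theoretic part of the Hatcher--Wagoner sequence. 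That term is not an assembly-map object and is not killed by the Farrell--Jones conjecture; Igusa's paper, cited at exactly this point, is devoted to its potential nontriviality. So the intermediate isomorphism you assert is unjustified and, in general, false. What saves the theorem---and what the paper actually proves---is only that this kernel is a (possibly infinitely generated) $2$-torsion group, hence locally finite and rationally acyclic, so that the surjection $\pi_0\TOP(\rmX^r_c)\to\pi_0\rmG(\rmX^r_c)$ is still a rational homology isomorphism by the Lyndon--Hochschild--Serre spectral sequence. For the same reason your preliminary reduction should be stated for locally finite (torsion) kernels rather than finite ones: the concordance contribution $\bbF_2[\rmN^r_c]/\bbF_2$ is infinite-dimensional over $\bbF_2$, unlike the genuinely finite groups $[\rmX^r_c,\TOP/\PL]$ and $[\rmX^r_c,\PL/\rmO]$ appearing in the last two comparisons. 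With these two corrections your argument closes up and is the paper's argument.
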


\begin{remark}
The first case~$c=1$ of this result is already known as a consequence of the~(independent) work of Hsiang--Sharpe~\cite{Hsiang+Sharpe} and Hatcher~\cite{Hatcher}. In order to generalize their arguments, we need to know the vanishing of surgery obstructions for manifolds with free nilpotent fundamental groups. Since the latter are poly--$\bbZ$ groups by~\eqref{eq:Witt2}, 
we can refer to Wall~\cite[15B]{Wall}. Waldhausen~\cite[Thms.~17.5, 19.4]{Waldhausen} has shown that the Whitehead spaces vanish for a class of groups that contains all poly--$\bbZ$ groups.
\end{remark}

\begin{proof}[Proof of Theorem~\ref{thm:mcgs}]
We start with the homeomorphisms. It is clear that the canonical homomorphism~$
\pi_0\TOP(\rmX^r_c)\to\pi_0\rmG(\rmX^r_c)$ is surjective. For~$\dim(\rmX^r_c)\geqslant 5$ this follows from surgery theory, see for instance~\cite[Thm.~15B.1]{Wall}. In our situation, it can also be seen explicitly, and for all~$\rmX^r_c$, because the projection~$\Aut(\rmN^r_c)\to\Out(\rmN^r_c)$ is evidently surjective, and~$\Aut(\rmN^r_c)$ tautologically acts on~$\rmN^r_c$, on the completion~$\rmN^r_c\otimes\bbR$ of~$\rmN^r_c\otimes\bbQ$, and therefore also on the quotient~$\rmX^r_c\cong\rmN^r_c\otimes\bbR/\rmN^r_c$. This action is algebraic, hence smooth.

We need to estimate the kernel of the homomorphism~$\pi_0\TOP(\rmX^r_c)\to\pi_0\rmG(\rmX^r_c)$, which is isomorphic to the fundamental group~$\pi_1(\rmG(\rmX^r_c)/\TOP(\rmX^r_c))$. It follows from surgery theory~(see again~\cite[Sec.~15B,~17A]{Wall} and the remark preceding this proof) that~$\rmG(\rmX^r_c)/\widetilde\TOP(\rmX^r_c)$ is contractible, where the notation~$\widetilde\TOP$ as usually refers to the block homeomorphism group. We thus have an isomorphism 
\[
\pi_1(\rmG(\rmX^r_c)/\TOP(\rmX^r_c))\cong\pi_1(\widetilde\TOP(\rmX^r_c)/\TOP(\rmX^r_c)).
\]
The right hand side can be determined from concordance/pseudo-isotopy theory as follows: Work of Hatcher and Igusa (\cite[Sec.~2,~3]{Hatcher} and~\cite[Sec.~8]{Igusa}) shows that this group is dominated by~$\pi_0$ of the concordance space of~$\rmX^r_c$. Therefore, the only possible contribution to~$\pi_0$ is a quotient of the~$2$--torsion group~$\bbF_{2}[\rmN^r_c]/\bbF_2$. We see that this vanishes rationally (in fact, away from~$2$), so that 
\[
\pi_0\TOP(\rmX^r_c)\longrightarrow\pi_0\rmG(\rmX^r_c)
\]
is a rational homology isomorphism if~$\dim(\rmX^r_c)\geqslant 5$. This settles the~$\TOP$ case.

Since the group~$\pi_1(\widetilde\CAT(\rmX^r_c)/\CAT(\rmX^r_c))$ is independent of the category~$\CAT$ of symmetries considered~\cite{Hatcher}, we see that it vanishes rationally for the remaining two cases~$\CAT=\PL$ and~$\CAT=\DIFF$ as well. For this reason, the kernels of the homomorphisms
\begin{equation}\label{eq:mcgs}
\pi_0\DIFF(\rmX^r_c)\longrightarrow
\pi_0\PL(\rmX^r_c)\longrightarrow
\pi_0\TOP(\rmX^r_c)
\end{equation}
between the mapping class groups in the various categories of structures are the same as the kernels between the corresponding blocked groups~$\pi_0\widetilde\CAT(\rmX^r_c)$, and it remains to be checked that these kernels are rationally trivial.

From surgery theory, see the references for~\cite[Sec.~17A]{Wall}, there exists an equivalence between the quotient space~$\widetilde\TOP(\rmX^r_c)/\widetilde\PL(\rmX^r_c)$, which measures the difference between the two block automorphism groups, and the function space of maps~\hbox{$\rmX^r_c\to\TOP/\PL$}. The work of Kirby--Siebenmann~(see~\cite[Rem.~2.25]{Madsen+Milgram} for instance) shows that the target space~$\TOP/\PL$ is an Eilen\-berg--Mac Lane space of type~$(\bbZ/2,3)$, which is rationally trivial. Therefore, the function space is also rationally trivial. 

We similarly have an equivalence between~$\widetilde\PL(\rmX^r_c)/\widetilde\DIFF(\rmX^r_c)$ and the function space of maps~\hbox{$\rmX^r_c\to\PL/\rmO$}. By work of Cerf and Kervaire--Milnor, the target~$\PL/\rmO$ also has finite homotopy groups, essentially the groups of exotic spheres, see~\cite[Rem.~4.21]{Madsen+Milgram}. We can therefore argue as in the paragraph before.
\end{proof}

%%%

\section*{Acknowledgments}

I thank A. Djament for his constructive comments on early drafts of this text. His and a referee's suggestions led to substantial improvements. I also thank B.I. Dundas, W.G. Dwyer, T. Pirashvili, A. Putman, C. Vespa, and N. Wahl.

This research has been supported by the Danish National Research Foundation through the Centre for Symmetry and Deformation~(DNRF92), and parts of this paper were written while I was visiting the Hausdorff Research Institute for Mathematics, Bonn. 

%%%

%%%

\vfill

\parbox{\linewidth}{%
Department of Mathematical Sciences\\
NTNU Norwegian University of Science and Technology\\
7491 Trondheim\\
NORWAY\\
\phantom{ }\\
\href{mailto:markus.szymik@ntnu.no}{markus.szymik@ntnu.no}\\
\href{https://folk.ntnu.no/markussz}{https://folk.ntnu.no/markussz}
}

%%%

\end{document}